\title[Dispersive decay for higher order KdV-type equations]{Dispersive decay bound of small data solutions to higher order scattering-supercritical KdV-type equations}
\author{Jongwon Lee}
\numberwithin{equation}{section}
\begin{document}
\newtheorem{thm}{Theorem}[section]

\newtheorem{cor}[thm]{Corollary}

\newtheorem{lem}[thm]{Lemma}

\newtheorem{prop}[thm]{Proposition}

\theoremstyle{plain}

\theoremstyle{definition}
\newtheorem{rmk}[thm]{Remark}
\newtheorem{defn}[thm]{Definition}
\newtheorem{example}[thm]{Example}

\maketitle
\begin{abstract}
	In this article, we prove that small localized data yield solutions to Higher order Korteweg-de Vries type equation with scattering-supercritical nonlinearity have linear dispersive decay in only a finite length of time. The proof is done by using space-time resonance method and analyzing the oscillatory integrals on the Fourier side.
\end{abstract}
\section{Introduction}
We are going to consider the higher-order KdV-type equation \begin{equation}\label{HO} \begin{cases}\partial_tu-(-\partial_x^2)^{\frac{n-1}{2}}\partial_x^nu=\pm u^{p-1}\partial_xu,\\u(0,x)=u_0(x)\end{cases}
\end{equation} with $n\geq 3$ is an odd integer and $p\geq 2$ is an integer. The equation \eqref{HO} describes the propagation of nonlinear waves in a dispersive medium. In particular, if $n=3$, then \eqref{HO} becomes the KdV equation($p=2$) and its variants($p=3$ for mKdV and $p\geq 4$ for gKdV), and if $n=5$, then it reduces to the Kawahara equation($p=2$) and its variants($p\geq 3$). Also, the seventh-order equation, namely the case when $n=7$, appears in \cite{pomeau1988structural} and \cite{goktacs1997symbolic}.\\The local and global well-posedness of \eqref{HO} can be shown in a standard way introduced in \cite{kenig1993well} and the aid of the conserved quantities $M=\int u^2$ and $H=\int \frac{(\partial_x^k u)^2}{2}\pm\frac{u^{p+1}}{p+1}$ with $k=\frac{n-1}{2}$. Hence the main interest of this article is the small data long-time behavior of \eqref{HO}. The small data asymptotic behavior of the equation \eqref{HO} has been studied extensively. First of all, in the case $p>n,$ Sidi, Sulem, and Sulem proved that a small-data solution of \eqref{HO} scatters to a linear solution after a long time if $p>\frac{\sqrt{n^2+4n}+n}{2}+1$. Kenig, Ponce, and Vega \cite{kenig1991oscillatory} later improved such result to the case $p>\max\{n+1,\frac{n}{2}+3\}$. Therefore, in the case $n\geq 3$ and $p>n+1$, small data linear scattering is known so far. In the case $p=n+1$, Hayashi and Naumkin \cite{hayashi1998large} proved the small data linear scattering in the case $(p,n)=(4,3),$ but the rest still remains as an open problem.\\
The case $p=n$ has also been studied intensively by many authors. First of all, the case $p=n=3$, namely the mKdV equation, was proved by many authors. Deift and Zhou \cite{deift1993steepest} proved the asymptotics of the solutions to mKdV equation using the inverse scattering transform, which depends on the complete integrability of the equation. Hayashi and Naumkin \cite{hayashi1999large} proved that the small data solutions to the mKdV equation shows a modified scattering behavior without using the complete integrability, but their proof required the mean-zero condition of the initial datum. Later the methods requiring neither complete integrability nor mean-zero condition are developed. Germain, Pusateri, and Rousset \cite{germain2016asymptotic} proved the modified scattering by analyzing the oscillatory integrals in the Fourier space. In the same time, Harrop-Griffiths \cite{harrop2016long} proved the same thing using the wave packet testing. In the case $n=p=5$, inspired by \cite{harrop2016long}, Okamoto \cite{okamoto2018long} proved that the fifth-order modified KdV equations show the small data modified scattering similar to the mKdV equation. The same author also proved \cite{okamoto2019asymptotic} the modified scattering behavior in a long time for any $n\geq 4$. Very recently, Wang \cite{wang2021global} proved the modified scattering in the case $n=p=5$ again, but his idea is different with Okamoto; his main argument is adopted from \cite{germain2016asymptotic}. Those results tell that the case $p=n$ is critical in the sense of scattering. On the other hand, when $p=n=2$, where equation \eqref{HO} becomes the Benjamin-Ono equation, Ifrim and Tataru \cite{ifrim2017well} figured out that a small data solution shows a linear dispersive decay in only a finite time.
\\The case $p<n$, which is supercritical in the sense of scattering, is not widely known so far. Ifrim, Koch, and Tataru \cite{ifrim2019dispersive} proved that if $n=3$ and $p=2$, i.e. in the case of KdV equation, a small data solution shows the linear dispersive decay in only a finite time interval, and such time interval cannot be extended further. Adopting the same idea as Ifrim, Koch, and Tataru, the author \cite{lee2022dispersive} proved the similar thing in the case $(n,p)=(5,2)$ and $(5,3)$, namely the Kawahara and modified Kawahara equation, respectively. However, whether the time scale where the dispersive decay bound holds is optimal is still unknown.\\
The above discussions can be summarized as:
\begin{itemize}
    \item When $p>n+1$(subcritical case), the small data solutions to \eqref{HO} scatters to a linear solution.
    \item When $p=n+1$, the long-time behavior of a small data solution is still unknown except the case $n=3$.
    \item When $p=n$(critical case), the small data solutions to \eqref{HO} does not show the linear scattering, but rather a modified scattering behavior if $n=3$, and a finite time linear dispersive decay if $n=2$.
    \item When $p<n$(supercritical case), only the cases $(p,n)=(2,3),\;(2,5),\;(3,5)$ are known. In such cases, a small data solution shows neither linear nor modified scattering behavior. Also it shows the linear dispersive decay in only a finite time.
\end{itemize}
The main goal of this article is to fill the unknown part of the case $p<n$. More specifically, we are going to show that a small data solution to \eqref{HO} with any $(p,n)$ with $p<n$ shows neither linear nor modified scattering, but only a finite-time linear dispersive decay. \\
Here is the main theorem:
\begin{thm} Consider the Cauchy problem
\eqref{HO} where $n\geq 5$ is an odd integer and $p$ is an integer with $2\leq p<n$. Let $u_0$ satisfy the bound \begin{equation}\label{small}
\Vert u_0\Vert_{H^\frac{n-1}{2}}+\Vert xu_0\Vert_{L^2}\leq \epsilon\ll 1.
\end{equation} Then the Cauchy problem \eqref{HO} has a unique global solution $u\in C(\mathbb{R},H^\frac{n-1}{2}(\mathbb{R}))$ which satisfies the linear dispersive decay bound \begin{equation}\label{ddecay}
|D^\beta u(t,x)|\lesssim\epsilon t^{-\frac{1}{n}-\frac{\beta}{n}}\langle t^{-\frac{1}{n}}x\rangle^{-\frac{n-2}{2n-2}+\frac{\beta}{n-1}},\quad 0 \leq \beta \leq n-2,\quad |t|\ll \epsilon^{-\frac{n(p-1)}{n-p}}
\end{equation}
where $D=|\partial_x|=H\partial_x$ with $H$ being the Hilbert transform. In the elliptic region $\{x\gtrsim t^{\frac{1}{n}}\},$ one has the better spatial decay as follows:
\begin{equation}\label{ell}
|D^\beta u(t,x)|\lesssim\epsilon t^{-\frac{1}{n}-\frac{\beta}{n}}\langle t^{-\frac{1}{n}}x\rangle^{-1+\frac{\beta}{n-1}},\quad 0 \leq \beta \leq n-2,\quad |t|\ll \epsilon^{-\frac{n(p-1)}{n-p}}
\end{equation}In both \eqref{ddecay} and \eqref{ell}, if $\beta$ is an integer, then $D^\beta$ can be simply replaced by $\partial_x^\beta$.
\end{thm}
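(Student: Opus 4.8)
\medskip

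\noindent\emph{Proof plan.}

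The plan is to run a bootstrap/continuity argument on the profile. Writing $W(t)$ for the linear propagator of \eqref{HO} — so that $\widehat{W(t)f}(\xi)=e^{it\xi^{n}}\hat f(\xi)$, the symbol of $(-\partial_x^2)^{\frac{n-1}2}\partial_x$ being $i\xi^{n}$ for odd $n$ (up to the sign convention fixing the elliptic region $x\gtrsim t^{1/n}$) — I would set $v(t):=W(-t)u(t)$, which satisfies the Duhamel identity $v(t)=u_0\pm\tfrac1p\int_0^tW(-s)\partial_x\!\big(u^p\big)(s)\,ds$, and introduce the adapted vector field $L(t):=W(t)\,x\,W(-t)=x+t\,a'(D)$, where $a'(D)$ is the Fourier multiplier with symbol $n\xi^{n-1}$; it commutes with the linear flow, $\tfrac{d}{dt}\big(L(t)u\big)=L_{\mathrm{lin}}\big(L(t)u\big)\pm\tfrac1pL(t)\partial_x(u^p)$ (here $L_{\mathrm{lin}}$ is the generator of the linear evolution, with symbol $i\xi^{n}$), and $\|L(t)u(t)\|_{L^2}=\|xv(t)\|_{L^2}$. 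On a maximal interval $[0,T]$ I would assume the energy bound $\|u(t)\|_{H^{(n-1)/2}}\le C_0\epsilon$, the weighted bound $\|xv(t)\|_{L^2}\le C_0\epsilon$, and the pointwise bounds \eqref{ddecay}, \eqref{ell} with a constant $C_1$, and show that for $\epsilon$ small and $T\ll\epsilon^{-n(p-1)/(n-p)}$ these all self-improve; the mass $\|u(t)\|_{L^2}=\|u_0\|_{L^2}$ is exactly conserved. In the language of the abstract, the reason this only works on a finite interval is that the $p$-linear phase $\Phi=\xi^{n}-\sum_j\eta_j^{n}$ (with $\xi=\sum_j\eta_j$) has its time-resonant set $\{\Phi=0\}$ and space-resonant set $\{\nabla_\eta\Phi=0\}$ genuinely intersecting when $p<n$, which obstructs decay globally in time.

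Next I would recover the pointwise bounds from the Sobolev-type data via a Klainerman--Sobolev / stationary-phase inequality: if $\|u\|_{L^2}+\|xv\|_{L^2}+\|u\|_{H^{(n-1)/2}}\le A$, then \eqref{ddecay}--\eqref{ell} hold with constant $\lesssim A$. One writes
\[
D^\beta u(t,x)=\tfrac1{2\pi}\int|\xi|^\beta e^{i\phi(\xi)}\hat v(t,\xi)\,d\xi,\qquad \phi(\xi)=x\xi+t\xi^{n},
\]
decomposes dyadically in $\xi$, and treats three regions. In the hyperbolic region $x\lesssim -t^{1/n}$ there are two nondegenerate critical points $\pm\xi_c$ with $\xi_c\sim(|x|/t)^{1/(n-1)}$ and $|\phi''(\xi_c)|\sim t^{1/(n-1)}|x|^{(n-2)/(n-1)}$; stationary phase, with $\|\hat v\|_{L^\infty}\lesssim\|\hat v\|_{L^2}^{1/2}\|\partial_\xi\hat v\|_{L^2}^{1/2}$ for the leading term and $\|\partial_\xi\hat v\|_{L^1_{\mathrm{loc}}}\lesssim\|xv\|_{L^2}$ for the remainder, gives the rate in \eqref{ddecay}, the factor $|\xi_c|^\beta$ producing the $\beta$-dependence. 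In the self-similar region $|x|\lesssim t^{1/n}$ I would use van der Corput for the $n$-th derivative, since $\partial_\xi^{n}\phi\equiv n!\,t$, to get the rate $t^{-1/n}$. In the elliptic region $x\gtrsim t^{1/n}$ there is no critical point and $|\phi'(\xi)|\gtrsim x$ (with $|\phi'|\sim t|\xi|^{n-1}$ for $\xi$ large); after rescaling $\xi=(x/nt)^{1/(n-1)}\eta$, repeated non-stationary integration by parts yields the improved spatial decay \eqref{ell}. High frequencies $|\xi|\gtrsim1$ are treated by the same oscillatory analysis, absorbing the loss through $\|u\|_{H^{(n-1)/2}}$.

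The time scale comes out of the energy estimate. Since $(n-1)/2$ is an integer and the linear part of \eqref{HO} is skew-adjoint,
\[
\tfrac{d}{dt}\|u(t)\|_{H^{(n-1)/2}}^2=\pm\tfrac2p\big\langle\partial_x^{(n-1)/2}\partial_x(u^p),\,\partial_x^{(n-1)/2}u\big\rangle .
\]
Distributing the derivatives and using the cancellation $\langle u^{p-1}\partial_x^{(n+1)/2}u,\partial_x^{(n-1)/2}u\rangle=-\tfrac{p-1}2\int u^{p-2}(\partial_xu)(\partial_x^{(n-1)/2}u)^2$, plus Kato--Ponce bounds for the lower-order commutator terms, one gets
\[
\Big|\tfrac{d}{dt}\|u(t)\|_{H^{(n-1)/2}}^2\Big|\lesssim\Big(\|u\|_{L^\infty}^{p-2}\|\partial_xu\|_{L^\infty}+\text{l.o.t.}\Big)\|u\|_{H^{(n-1)/2}}^2 .
\]
Inserting the bootstrap decay $\|u(s)\|_{L^\infty}\lesssim\epsilon s^{-1/n}$, $\|\partial_xu(s)\|_{L^\infty}\lesssim\epsilon s^{-2/n}$ gives $\int_0^t\|u\|_{L^\infty}^{p-2}\|\partial_xu\|_{L^\infty}\,ds\lesssim\epsilon^{p-1}\!\int_0^t s^{-p/n}\,ds\lesssim\epsilon^{p-1}t^{(n-p)/n}$, which is $\ll1$ exactly when $t\ll\epsilon^{-n(p-1)/(n-p)}$ (this is where $p<n$ enters), so Gr\"onwall improves the energy bound to $\tfrac12C_0\epsilon$ there.

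Finally, for the weighted estimate I would compute, using $[L(t),\partial_x]=-1$ and the homogeneity identity $\xi\,a'(\xi)=n\xi^{n}$ (so that $a'(D)\partial_x=nL_{\mathrm{lin}}$),
\[
L(t)\partial_x(u^p)=\partial_x\!\big(xu^p\big)-u^p+nt\,L_{\mathrm{lin}}(u^p),
\]
whence $\tfrac{d}{dt}\|xv\|_{L^2}^2=\pm\tfrac2p\,\mathrm{Re}\big\langle L(t)u,\ \partial_x(xu^p)-u^p+nt\,L_{\mathrm{lin}}(u^p)\big\rangle$. The last, a priori derivative-losing, term is the crux: it is an exact time derivative along the linear flow, so one integrates by parts in $t$, then distributes derivatives between the two factors in the critical-regularity pairings; every resulting term is then bounded by $L^2$-norms of $u^p$, $xu^p$ and their mild derivatives, which are in turn controlled by \eqref{ddecay}--\eqref{ell}. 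Here the hypothesis $p\ge2>\tfrac{n-1}{n-2}$ is used so that the spatial tails are integrable ($\int\langle t^{-1/n}x\rangle^{-p(n-2)/(n-1)}\,dx\lesssim t^{1/n}$), with the faster decay \eqref{ell} handling the right tail. One concludes $\|xv(t)\|_{L^2}\le\|xu_0\|_{L^2}+(\text{a term }\ll\epsilon)\le\tfrac12C_0\epsilon$ on $t\ll\epsilon^{-n(p-1)/(n-p)}$, and re-inserting the improved norms into the dispersive estimate of the second step recovers \eqref{ddecay}, \eqref{ell} with $C_1$ halved, closing the bootstrap; global existence in $C(\mathbb R,H^{(n-1)/2})$ follows from the energy control on this range together with conservation of $M$ and $H$ beyond it. I expect this last step to be the main obstacle: controlling $\|xv\|_{L^2}$ at the Sobolev-critical regularity $H^{(n-1)/2}$, up to the sharp threshold, without losing derivatives in the $nt\,L_{\mathrm{lin}}(u^p)$ term is delicate, and the careful bookkeeping of the oscillatory integral across the transition zone $|x|\sim t^{1/n}$ is the other point that needs attention.
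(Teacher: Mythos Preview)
Your bootstrap scheme has a genuine gap: you cannot keep $\|xv(t)\|_{L^2}$ uniformly bounded by $C_0\epsilon$ on the stated interval, and without that your Klainerman--Sobolev step fails to improve the pointwise bounds.

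Working with $y:=(x+nt\partial_x^{n-1})u+\tfrac{n}{p}tu^p$ (which differs from $Lu$ by a harmless correction), one finds that $y$ satisfies the linearized equation with an inhomogeneous forcing $-\tfrac{n-p}{p}u^p$. Under the decay you are assuming, $\|u^p\|_{L^2}=\|u\|_{L^{2p}}^p\sim\epsilon^p t^{-(2p-1)/(2n)}$, and integrating this over $[1,T]$ gives a contribution $\sim\epsilon^p T^{(2n-2p+1)/(2n)}$; on the time scale $T\sim\epsilon^{-n(p-1)/(n-p)}$ this is exactly $\epsilon\,T^{1/(2n)}$. Thus $\|xv(t)\|_{L^2}\lesssim\epsilon\,t^{1/(2n)}$ is what the energy method actually delivers, not $O(\epsilon)$. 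Your proposed fix---integrating the $nt\,L_{\mathrm{lin}}(u^p)$ term by parts in time---runs straight into the obstruction you named in your first paragraph: on the Fourier side this produces a $p$-linear integral with phase $H=\xi^n-\sum_j\eta_j^n$, and time integration by parts introduces the factor $1/H$, singular on the non-empty space-time resonant set (e.g.\ $\eta_j=\pm\xi$ when $p$ is odd). So this step cannot be made to yield a uniform weighted bound.

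Once $\|xv\|_{L^2}$ grows, your dispersive step breaks: you feed the leading stationary-phase term through $\|\hat v\|_{L^\infty}\lesssim\|v\|_{L^2}^{1/2}\|xv\|_{L^2}^{1/2}$, which now only gives $\|\hat v\|_{L^\infty}\lesssim\epsilon\,t^{1/(4n)}$, and the recovered pointwise bounds carry a growing prefactor rather than the sharp $\epsilon$. The paper's remedy is to allow $\|xf\|_{L^2}\lesssim\epsilon\,t^{1/(2n)}$ and to add $\|\hat f(t)\|_{L^\infty}$ as an \emph{independent} bootstrap norm, proving $\|\hat f\|_{L^\infty}\lesssim\epsilon$ directly by a stationary-phase analysis of
\[
\partial_t\hat f(t,\xi)=i\xi\int e^{-itH}\hat f(t,\xi_1)\cdots\hat f(t,\xi_p)\,d\xi_1\cdots d\xi_{p-1}.
\]
One locates the stationary points of $H$, extracts explicit leading terms of size $t^{-(p-1)/2}|\xi|^{1-(p-1)(n-2)/2}$ (which, after a phase correction when $p$ is odd, are time-integrable to $O(\epsilon)$ on $|t|\ll\epsilon^{-n(p-1)/(n-p)}$), and shows the remainder is in $L^1_t$ uniformly in $\xi$ via a dyadic frequency decomposition. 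This Fourier-side oscillatory-integral argument is the main technical content and is absent from your plan. (A minor point: the $H^{(n-1)/2}$ bound follows immediately from the conserved mass and Hamiltonian, so no Gr\"onwall is needed there.)
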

\begin{rmk}
The time scale in which \eqref{ddecay} holds tends to get larger as $p$ increases. This is because the nonlinear term becomes more perturbative as the power gets bigger. On the other hand, the time scale tends to get smaller as $n$ increases. This is because the group velocity for the frequency $\xi$ is given as $-n\xi^{n-1}$, which asserts that the low frequency part travels slower as $n$ increases, which worsens the decay of the solution.
\end{rmk}
\begin{rmk}
Let me give the motivation of the time scale $\epsilon^{-\frac{n(p-1)}{n-p}}$ where the dispersive decay bound holds.\\Suppose that the solution $u$ of \eqref{HO} satisfies the linear decay bound same as \eqref{pointwise}. Then Duhamel's formula, Minkowski's integral inequality, and the bound \eqref{pointwise} gives
\[\Vert u(T)\Vert_{L^2}\lesssim \Vert u(1)\Vert_{L^2}+\int_1^T \epsilon^{p-1}\Vert u(s)\Vert_{L^2}\frac{ds}{s^{\frac{p}{n}}}\lesssim (1+\epsilon^{p-1}T^{1-\frac{p}{n}})\sup_{1\leq t\leq T}\Vert u(t)\Vert_{L^2},\] so if one wants the $L^2$ norm of $u$ to be bounded by $\sup_{1\leq t\leq T}\Vert u(t)\Vert_{L^2}$, we should have $T\lesssim \epsilon^{-\frac{n(p-1)}{n-p}}$. Loosely speaking, one can only guarantee that $u$ behaves `linearly' in such time scale.
\end{rmk}
The proof of the main theorem will be done by the similar argument as in \cite{germain2016asymptotic} and  \cite{wang2021global}.
\\
If $p$ is even, then the equation \eqref{HO} with both $+$ and $-$ sign on the nonlinearity has essentially the same dynamics, since the transformation $u(t,x)\mapsto -u(t,x)$ interchanges the two cases. On the other hand, two cases differ if $p$ is odd. However, such difference is not significant in the small data analysis, so we may only consider the equation with $+$ sign.
\section{Outline of the proof}
Before joining the proof of the main theorem, we first give a dispersive decay bound of linear solutions, which is a generalization of Lemma 2.1 of \cite{germain2016asymptotic} and \cite{wang2021global}:
\begin{lem}
Let $S(t)$ be the linear propagator of the equation \[\partial_tu+(-1)^{\frac{n+1}{2}}\partial_x^nu=0.\]For any $t\geq 1$, $x\in\mathbb{R}$, an odd integer $n$ with $n\geq 3$, and $u(t,x)=S(t)f(t,x)$ with $f$ satisfying \begin{equation}\label{fbd}\Vert \hat{f}(t)\Vert_{L^\infty}+t^{-\frac{1}{2n}}\Vert xf(t)\Vert_{L^2}\leq A,\end{equation} the following estimate holds:\begin{equation}\label{pointwise}
\left||\partial_x|^\beta u(x,t)\right|\lesssim At^{-\frac{1}{n}-\frac{\beta}{n}}\langle t^{-\frac{1}{n}}x\rangle^{-\frac{n-2}{2n-2}+\frac{\beta}{n-1}},\quad 0\leq \beta\leq n-2.
\end{equation} Furthermore, in the region $x>0$, one has the better spatial decay:\begin{equation}\label{betterdecay}\left||\partial_x|^\beta u(x,t)\right|\lesssim At^{-\frac{1}{n}-\frac{\beta}{n}}\langle t^{-\frac{1}{n}}x\rangle^{-1+\frac{\beta}{n-1}},\quad 0\leq \beta\leq n-2.
\end{equation} In both \eqref{pointwise} and \eqref{betterdecay}, if $\beta$ is an integer, $|\partial_x|$ can be replaced by $\partial_x$. Moreover, under the same assumption, for any $\beta\in [0,\frac{n-2}{2})$ and all $p$ satisfying $p(\frac{n-2}{2n-2}-\frac{\beta}{n-1})>1,$\begin{equation}\label{lp}
    \Vert |\partial_x|^\beta u(t)\Vert_{L^p}\lesssim At^{-\frac{1}{n}-\frac{\beta}{n}+\frac{1}{np}}.
\end{equation}
\end{lem}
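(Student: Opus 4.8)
\emph{The plan.} I would pass to the Fourier side, rescale away the time dependence of the phase, reduce the two pointwise bounds to a single scalar oscillatory integral estimate, and then treat that integral by stationary phase / integration by parts in three regions (self‑similar, elliptic, hyperbolic); the $L^p$ bound \eqref{lp} will follow just by integrating \eqref{pointwise}. Concretely: since $S(t)$ has Fourier symbol $e^{it\xi^n}$ (up to an irrelevant sign), $u=S(t)f$ gives $|\partial_x|^\beta u(t,x)=\frac{1}{2\pi}\int_{\mathbb{R}}|\xi|^\beta e^{i(x\xi+t\xi^n)}\hat f(t,\xi)\,d\xi$, and the substitution $\xi=t^{-1/n}\eta$, $y:=t^{-1/n}x$, yields
\[
|\partial_x|^\beta u(t,x)=\frac{t^{-\frac{1+\beta}{n}}}{2\pi}\,J(y),\qquad J(y):=\int_{\mathbb{R}}|\eta|^\beta e^{i\psi(\eta)}g(\eta)\,d\eta,\quad \psi(\eta):=y\eta+\eta^n,\ \ g:=\hat f\big(t,t^{-1/n}\cdot\big).
\]
By Plancherel and \eqref{fbd}, $\|g\|_{L^\infty}=\|\hat f(t)\|_{L^\infty}\le A$ and $\|g'\|_{L^2}^2=t^{-1/n}\|\partial_\xi\hat f(t)\|_{L^2}^2\lesssim t^{-1/n}\|xf(t)\|_{L^2}^2\lesssim A^2$ — the weight $t^{-1/(2n)}$ in \eqref{fbd} being exactly what cancels the Jacobian. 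So \eqref{pointwise} and \eqref{betterdecay} reduce to $|J(y)|\lesssim A\langle y\rangle^{-\frac{n-2}{2n-2}+\frac{\beta}{n-1}}$ for all $y$, and $|J(y)|\lesssim A\langle y\rangle^{-1+\frac{\beta}{n-1}}$ for $y>0$. For integer $\beta$ the same computation applies with $|\xi|^\beta$ replaced by $(i\xi)^\beta$, which only inserts a unimodular constant and the bounded factor $\mathrm{sgn}(\xi)^\beta$; this is what lets one replace $|\partial_x|^\beta$ by $\partial_x^\beta$.

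\emph{Phase geometry; the self‑similar and elliptic regions.} Since $n$ is odd, $\psi'(\eta)=y+n\eta^{n-1}$ with $\eta^{n-1}\ge0$: for $y\ge0$ there is no real stationary point and $|\psi'(\eta)|\sim y+|\eta|^{n-1}$, while for $y<0$ there are exactly two, $\pm\eta_0$ with $\eta_0=(|y|/n)^{1/(n-1)}$ and $|\psi''(\pm\eta_0)|\sim|y|^{\frac{n-2}{n-1}}$. If $|y|\lesssim1$ then $\langle y\rangle\sim1$ and it suffices to prove $|J(y)|\lesssim A$: the part $|\eta|\le1$ is bounded by $\|g\|_{L^\infty}$ since $|\eta|^\beta\in L^1(|\eta|\le1)$, and on $|\eta|>1$, where $|\psi'|\sim|\eta|^{n-1}$, one integration by parts closes it, the term where the derivative hits $g$ being $\lesssim\|g'\|_{L^2}\,\||\eta|^{\beta-n+1}\|_{L^2(|\eta|>1)}\lesssim A$ (finite because $\beta\le n-2$). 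In the elliptic region $y\gtrsim1$ there is no stationary point and $|\psi'(\eta)|\sim y+|\eta|^{n-1}$, so repeated integration by parts applies; after rescaling $\eta=y^{1/(n-1)}s$, the terms in which derivatives fall on $|\eta|^\beta$, on $1/\psi'$, or on the iterated $\psi$‑derivatives are controlled by $\|g\|_{L^\infty}$ and produce the decay $\langle y\rangle^{-1+\frac{\beta}{n-1}}$ (the $s$‑integrals converge since $\beta<n-1$), while the term where a derivative lands on $g$ is handled via $\|g'\|_{L^2}$ as above, after a frequency localization where needed. The non‑smoothness of $|\eta|^\beta$ at the origin is harmless because $\psi'(0)=y\neq0$ there, so integration by parts against $\beta|\eta|^{\beta-1}\mathrm{sgn}\,\eta\in L^1_{\mathrm{loc}}$ is legitimate; for integer $\beta$ this point does not arise.

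\emph{The hyperbolic region $y\lesssim-1$.} Take a smooth partition of unity isolating the two sets $\{|\eta\mp\eta_0|\lesssim\delta\}$ with $\delta:=|y|^{-\frac{n-2}{2(n-1)}}\ll1$. Near $\pm\eta_0$, Taylor‑expand $\psi$ and apply stationary phase: replacing $g$ by $g(\pm\eta_0)$ yields the principal contribution of size $\sim A|\eta_0|^\beta|\psi''(\pm\eta_0)|^{-1/2}\sim A|y|^{\frac{\beta}{n-1}-\frac{n-2}{2(n-1)}}$, which is exactly the claimed rate, whereas the remainder $g(\eta)-g(\pm\eta_0)$ contributes $\lesssim|\eta_0|^\beta\|g'\|_{L^2}\delta^{3/2}$, smaller by a factor $\delta^{1/2}$. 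Off the stationary points one integrates by parts, using $|\psi'(\eta)|\gtrsim\delta^{-2}|\eta\mp\eta_0|$ in a neighborhood of $\pm\eta_0$ and $|\psi'(\eta)|\gtrsim|\eta|^{n-1}$ for $|\eta|$ large (with the $\eta\approx0$ care as above), and one checks that in each subregion the boundary, curvature, and $\|g'\|_{L^2}$ terms all carry a power of $|y|$ at most $\frac{\beta}{n-1}-\frac{n-2}{2(n-1)}$; this gives \eqref{pointwise}.

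\emph{The $L^p$ bound and the main obstacle.} For \eqref{lp} one raises \eqref{pointwise} to the $p$‑th power, integrates in $x$, and substitutes $y=t^{-1/n}x$, obtaining $\||\partial_x|^\beta u(t)\|_{L^p}^p\lesssim A^p t^{-\frac{(1+\beta)p}{n}+\frac1n}\int_{\mathbb{R}}\langle y\rangle^{-p(\frac{n-2}{2n-2}-\frac{\beta}{n-1})}\,dy$; the $y$‑integral is finite precisely under the stated hypotheses $\beta<\frac{n-2}{2}$ and $p(\frac{n-2}{2n-2}-\frac{\beta}{n-1})>1$, which yields $\||\partial_x|^\beta u(t)\|_{L^p}\lesssim A\,t^{-\frac1n-\frac{\beta}{n}+\frac1{np}}$. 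The step I expect to be the main obstacle is the oscillatory‑integral analysis with an amplitude of limited regularity: $g$ has only one derivative in $L^2$, so integration by parts cannot be iterated freely, and $|\eta|^\beta$ is singular at $\eta=0$ and behaves differently at small and large frequencies; the delicate point is to glue together the stationary‑phase estimates near $\pm\eta_0$, the integration‑by‑parts estimates at intermediate and large $|\eta|$, and the crude $L^\infty$ estimate near the origin, in such a way that all the $\|g'\|_{L^2}$‑contributions stay subdominant and the powers of $\langle y\rangle$ add up to the sharp exponent.
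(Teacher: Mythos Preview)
Your proposal is correct and follows essentially the same approach the paper has in mind: the paper does not write out a proof but simply says the arguments of Germain--Pusateri--Rousset and Wang carry over, and what you have outlined---rescale $\xi=t^{-1/n}\eta$ to reduce to an oscillatory integral with amplitude $g=\hat f(t,t^{-1/n}\cdot)$ satisfying $\|g\|_{L^\infty}+\|g'\|_{L^2}\lesssim A$, then split into the self-similar, elliptic, and hyperbolic regions and apply stationary phase/integration by parts, with \eqref{lp} obtained by integrating \eqref{pointwise}---is precisely that argument. Your identification of the main delicacy (only one $L^2$ derivative on $g$, plus the mild singularity of $|\eta|^\beta$ at the origin) is accurate and is exactly what those references handle.
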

 Since the arguments in \cite{germain2016asymptotic} and \cite{wang2021global} are still valid here, we skip the details.
\\To give the nonlinear analysis, we first define the norm:\begin{align*}\Vert u\Vert_X:=&\Vert u(t)\Vert_{H^{\frac{n-1}{2}}}+t^{-\frac{1}{2n}}\Vert xf(t)\Vert_{L^2}+\Vert \hat{f}(t)\Vert_{L^\infty}.\end{align*}
First note that this norm is strong enough to guarantee the bound \eqref{fbd}. The $H^{\frac{n-1}{2}}$ norm of $u$ is simply the scale where the equation \eqref{HO} is globally well-posed(at least for the small data). The terms $t^{-\frac{1}{2n}}\Vert xf(t)\Vert_{L^2}+\Vert \hat{f}(t)\Vert_{L^\infty}$ are simply the norm needed in \eqref{fbd}.
Now we have a bootstrap assumption \begin{equation}\label{bootstrap}\Vert u\Vert_X\leq M\epsilon,\quad |t|<(M^{\frac{p+1}{p-1}}\epsilon)^{-\frac{n(p-1)}{n-p}}\end{equation} where $M\gg 1$ is a sufficiently large universal constant not depending on $\epsilon$ and $\epsilon$ is sufficiently small so that $M\epsilon\ll 1$. Here the bound on $t$ is where the time bound in \eqref{ddecay} comes from. The bound \eqref{bootstrap} and Lemma 2.1 readily yield the bound \begin{align*}
\left||\partial_x|^\beta u(x,t)\right|\lesssim M\epsilon t^{-\frac{1}{n}-\frac{\beta}{n}}\langle t^{-\frac{1}{n}}x\rangle^{-\frac{n-2}{2n-2}+\frac{\beta}{n-1}},\quad 0\leq \beta\leq n-2.
\end{align*} Our goal is to show that \begin{equation}\label{nts}
    \Vert u\Vert_X\lesssim\epsilon,\quad |t|<(M^{\frac{p+1}{p-1}}\epsilon)^{-\frac{n(p-1)}{n-p}}.
\end{equation} The proof of \eqref{nts} is divided into two parts. First is the energy estimates, which proves the bound on $\Vert u(t)\Vert_{H^{\frac{n-1}{2}}}+t^{-\frac{1}{2n}}\Vert xf(t)\Vert_{L^2}$. Second is the frequency analysis of oscillatory integrals, which is necessary to prove the bound on $\Vert \hat{f}(t)\Vert_{L^\infty}.$\\For notational convenience, we will often abbreviate $M\epsilon:=\epsilon_1.$ From the time bound we have just assumed, it is easy to see that $\epsilon_1^p<\epsilon t^{-\frac{n-p}{n}}$ and $\epsilon_1^{p+1}<\epsilon^2 t^{-\frac{n-p}{n}}$, which will be used frequently in the later analysis.
\section{Energy Estimates}
The goal of this section is to prove the following proposition:
\begin{prop}
Let $u$ be a solution to \eqref{HO} satisfying \eqref{small} and \eqref{bootstrap}. Then one has \begin{equation}\label{energy}
\Vert u(t)\Vert_{H^\frac{n-1}{2}}+t^{-\frac{1}{2n}}\Vert xf(t)\Vert_{L^2}\lesssim\epsilon,\quad |t|<(M^\frac{p+1}{p-1}\epsilon)^{-\frac{n(p-1)}{n-p}}
\end{equation} for any $p$.
\end{prop}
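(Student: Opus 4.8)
The plan is to prove the two estimates in \eqref{energy} by entirely separate means.

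\emph{The $H^{(n-1)/2}$ bound} I would obtain from the conserved quantities alone, so that it in fact holds for all $t$ and uses only \eqref{small}, not \eqref{bootstrap}. Conservation of $M=\int u^2$ gives $\|u(t)\|_{L^2}=\|u_0\|_{L^2}\le\epsilon$. Writing $k=\tfrac{n-1}2$, conservation of $H$ gives $\|\partial_x^k u(t)\|_{L^2}^2=2H(u_0)\mp\tfrac2{p+1}\|u(t)\|_{L^{p+1}}^{p+1}$, and the one-dimensional Gagliardo--Nirenberg inequality bounds $\|u\|_{L^{p+1}}^{p+1}\lesssim\|u\|_{L^2}^{(1-\theta)(p+1)}\|\partial_x^k u\|_{L^2}^{(p-1)/(n-1)}$. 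The key point is that the power of $\|\partial_x^k u\|_{L^2}$ equals $(p-1)/(n-1)$, which is $<1$ precisely because $p<n$, hence $<2$; since $|H(u_0)|\lesssim\epsilon^2$ by \eqref{small}, a routine absorption (Young's inequality and continuity in $t$) yields $\|\partial_x^k u(t)\|_{L^2}\lesssim\epsilon$, so $\|u(t)\|_{H^{(n-1)/2}}\lesssim\epsilon$.

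\emph{For the weighted bound} I would introduce the vector field $J:=S(t)\,x\,S(-t)$, which differs from multiplication by $x$ by a $t$-dependent Fourier multiplier of order $n-1$ and satisfies $\|xf(t)\|_{L^2}=\|Ju(t)\|_{L^2}$. Since $[\mathcal L,x]=t^{-1}(J-x)$, with $\mathcal L$ the skew-adjoint linear operator ($\widehat{\mathcal Lu}=i\xi^n\hat u$), the quantity $Ju$ solves the linearized equation with forcing, $\partial_t(Ju)=\mathcal L(Ju)+JN$, $N$ the nonlinearity. Using $\mathrm{Re}\langle Ju,\mathcal L(Ju)\rangle=0$, the energy identity is $\tfrac12\tfrac d{dt}\|Ju\|_{L^2}^2=\mathrm{Re}\langle Ju,JN\rangle$; with $\|Ju(1)\|_{L^2}=\|xf(1)\|_{L^2}\lesssim\epsilon$ from the local theory, a Gronwall argument reduces everything to bounding $\int_1^t|\langle Ju(s),JN(s)\rangle|\,ds$ by $\epsilon\,t^{1/(2n)}$. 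To organize $\langle Ju,JN\rangle$ I would use $[J,\partial_x]=-1$ to write $JN=\pm\tfrac1p(\partial_x J(u^p)-u^p)$ and expand $J(u^p)$ by the Leibniz rule; this isolates the main term $u^{p-1}\partial_x(Ju)$, whose pairing with $Ju$, after one integration by parts, is $\lesssim\|u\|_{L^\infty}^{p-2}\|u_x\|_{L^\infty}\|Ju\|_{L^2}^2\lesssim\epsilon_1^{p-1}s^{-p/n}\|Ju\|_{L^2}^2$ by Lemma~2.1. This furnishes the Gronwall coefficient, and its integral over the time interval \eqref{bootstrap} is $\lesssim\epsilon_1^{p-1}t^{(n-p)/n}\lesssim M^{-2}\ll1$, so the exponential factor it produces is harmless. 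The genuinely lower-order pieces of $\langle Ju,JN\rangle$ are $\|Ju\|_{L^2}$ times a source; there the sharper $L^p$-decay \eqref{lp} is what makes the time integral close, e.g. $|\langle Ju,u^p\rangle|\le\|Ju\|_{L^2}\|u\|_{L^{2p}}^p\lesssim\|Ju\|_{L^2}\epsilon_1^p s^{-p/n+1/(2n)}$ and $\int_1^t\epsilon_1^p s^{-p/n+1/(2n)}\,ds\lesssim\epsilon\int_1^t s^{-1+1/(2n)}\,ds\lesssim\epsilon\,t^{1/(2n)}$, using $\epsilon_1^p<\epsilon s^{-(n-p)/n}$; the budget inequalities $\epsilon_1^p<\epsilon s^{-(n-p)/n}$ and $\epsilon_1^{p+1}<\epsilon^2 s^{-(n-p)/n}$ enter in exactly this way for every remaining source term.

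\emph{The main obstacle} is the family of terms in which the spatial weight, or the order-$(n-1)$ derivative carried by $J$, falls on the nonlinearity: these naively require more than the $\tfrac{n-1}2$ derivatives on $u$ that $\|\cdot\|_X$ controls, and one cannot even split $\|JN\|_{L^2}\le\|xN\|_{L^2}+nt\|\partial_x^{n-1}N\|_{L^2}$, since each term on the right may be infinite. Here I would keep $Ju$ undivided and pass to the Fourier representation $\widehat{JN}(\xi)=nt\,\xi^{n-1}\widehat N(\xi)+i\partial_\xi\widehat N(\xi)$, with $\widehat N(\xi)=\pm\tfrac{i\xi}p\int e^{is\Phi}\prod_j\hat f(\xi_j)\,d\sigma$ and $\Phi=\sum_j\xi_j^n-(\sum_j\xi_j)^n$, and exploit the oscillation: away from the space--time resonant set --- a union of low-dimensional linear subsets outside of which $\Phi$ or its $\xi_j$-gradient is non-degenerate --- integration by parts in the $\xi_j$ variables replaces the growing factor $s$ by bounded multipliers and by the weighted profiles $\partial_{\xi_j}\hat f$, which are controlled by the bootstrap, so that no derivatives are actually lost; near the resonant set one estimates directly, using its small measure together with $\|\hat f\|_{L^\infty}\lesssim\epsilon_1$ and $\|u\|_{H^{(n-1)/2}}\lesssim\epsilon_1$. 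Completing this frequency analysis, while checking that no time integral accumulates more than the permitted power $t^{1/(2n)}$, is the bulk of the section.
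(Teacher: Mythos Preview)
Your treatment of the $H^{(n-1)/2}$ bound is the same as the paper's and is fine.

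For the weighted bound, however, the paper takes a much shorter route that completely sidesteps what you call ``the main obstacle.'' Rather than running an energy estimate on $Ju$ and then confronting the bad terms in $JN$ by Fourier/space--time resonance analysis, the paper exploits the scaling symmetry $u\mapsto\lambda^{(n-1)/(p-1)}u(\lambda^n t,\lambda x)$: the scaling vector field $\mathcal S=nt\partial_t+x\partial_x+\tfrac{n-1}{p-1}$ commutes well with the equation, and after stripping off a $\partial_x^{-1}u$ contribution one is led to define
\[
y:=(x+nt\partial_x^{n-1})u+\tfrac{n}{p}tu^p \;=\; Ju+\tfrac{n}{p}tu^p.
\]
The point is that the correction $\tfrac{n}{p}tu^p$ algebraically cancels the dangerous high-derivative commutator terms, and $y$ satisfies the \emph{clean} inhomogeneous linearized equation
\[
\partial_t y+(-1)^{(n+1)/2}\partial_x^n y=u^{p-1}\partial_x y-\tfrac{n-p}{p}u^p,
\]
whose only source is $u^p$ --- no weights, no extra derivatives. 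A five-line energy estimate then gives $\|y\|_{L^2}\lesssim\epsilon t^{1/(2n)}$, and $\|xf\|_{L^2}\le\|y\|_{L^2}+\tfrac{n}{p}t\|u\|_{L^{2p}}^p$ closes with the $L^{2p}$ decay you already quoted. So the entire weighted estimate in the paper is half a page, with no frequency decomposition at all.

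By contrast, your plan leaves the correction out, so the remainder $JN-u^{p-1}\partial_x(Ju)$ genuinely contains terms like $t\,\partial_x^k(u^{p-1})\,\partial_x^{n-k}u$ that exceed the $H^{(n-1)/2}$ budget, and you are forced into the oscillatory-integral machinery. That machinery is essentially a replay of Section~4 and may in principle close, but you have not checked the key balances (e.g.\ that the loss $t|\xi|^{n}$ from $J$ is beaten by the stationary-phase gain $(t|\xi|^n)^{-(p-1)/2}$ \emph{and} that the pairing with $Ju\in L^2$ does not reintroduce a loss), and you describe the near-resonant piece only heuristically. The missing idea is the scaling-vector-field correction $y=Ju+\tfrac{n}{p}tu^p$: once you have it, the obstacle evaporates and no Fourier analysis is needed here.
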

First prove the bound \eqref{energy}. The bound of $\Vert u(t)\Vert_{H^\frac{n-1}{2}}$ is obvious due to the conservation of mass and Hamiltonian of \eqref{HO}.\\To prove the bound on $\Vert xf(t)\Vert_{L^2}$, first note that the equation \eqref{HO} enjoys a scaling symmetry $u(t,x)\mapsto \lambda^{\frac{n-1}{p-1}} u(\lambda^n t,\lambda x)$, so the scaling vector field is given as $\mathcal{S}=nt\partial_t+x\partial_x+\frac{n-1}{p-1}$. Let $\mathcal{L}$ be the linear part of \eqref{HO}. Using the commutator relations \[[\mathcal{L},\mathcal{S}]=n\mathcal{L},\quad [\mathcal{S},\partial_x]=-\partial_x,\] one may see that $z:=\mathcal{S}u$ satisfies the equation \[\partial_tz+(-1)^{\frac{n+1}{2}}\partial_x^nz=\partial_x(u^{p-1}z),\]and $w:=\partial_x^{-1}z$ (formally) satisfies the linearized equation \begin{equation}\label{linearized}\partial_tw+(-1)^{\frac{n+1}{2}}\partial_x^nw=u^{p-1}\partial_xw.
\end{equation}Here why I said `formally' is that $z$ may not be well-defined, since $z=(x+nt\partial_x^{n-1})u+\frac{n}{p}tu^p+\frac{n-p}{p-1}\partial_x^{-1}u$ contains an inverse derivative of $u$. Hence, we shall remove the inverse derivative and define $y:=(x+nt\partial_x^{n-1})u+\frac{n}{p}tu^p=S(t)(xf)+\frac{n}{p}tu^p.$ Then $y$ satisfies the inhomogeneous linearized equation \[\partial_ty+(-1)^{\frac{n+1}{2}}\partial_x^ny=u^{p-1}\partial_xy-\frac{n-p}{p}u^p.\]
We shall first prove the bound on $y$, and finally the bound on $f=S(-t)u$.\\
First by a direct energy estimate one has
\begin{align*}
\frac{1}{2}\frac{d}{dt}\Vert y\Vert_{L^2}^2 =& \int y(u^{p-1}\partial_xy-\frac{n-p}{p}u^p)dx=-\frac{1}{2}\int \partial_x(u^{p-1})y^2dx-\frac{n-p}{p}u^pydx\\\lesssim& \Vert \partial_x(u^{p-1})\Vert_{L^\infty}\Vert y\Vert_{L^2}^2+\Vert u^p\Vert_{L^2}\Vert y\Vert_{L^2}\lesssim\epsilon_1^{p-1}t^{-\frac{p}{n}}\Vert y\Vert_{L^2}^2+\epsilon_1^{1-p}t^{\frac{p}{n}}\Vert u\Vert_{2p}^{2p}\\\lesssim& \epsilon_1^{p-1}t^{-\frac{p}{n}}\Vert y\Vert_{L^2}^2+\epsilon_1^{p+1}t^{-\frac{p-1}{n}}.
\end{align*}
Now multiplying $\exp(C\epsilon_1^{p-1}t^{1-\frac{p}{n}})$ on both sides and integrating in $t$ gives \[\Vert y\Vert_{L^2}^2\lesssim \epsilon_1^{p+1}t^{1-\frac{p-1}{n}}\leq \epsilon^2t^{\frac{1}{n}},\quad |t|<(M^\frac{p+1}{p-1}\epsilon)^{-\frac{n(p-1)}{n-p}}\] so one has $\Vert y\Vert_{L^2}\lesssim \epsilon t^{\frac{1}{2n}}$. Then a bound on $xf$ is given as \[\Vert xf\Vert_{L^2}\lesssim \Vert y\Vert_{L^2}+t\Vert u\Vert_{2p}^p\lesssim \epsilon t^{\frac{1}{2n}}+\epsilon_1^p t^{-\frac{p}{n}+\frac{1}{2n}+1}\lesssim \epsilon t^{\frac{1}{2n}}.\]
\section{Frequency Analysis of Oscillatory Integrals}
In this section, we are going to prove that $\Vert \hat{f}(t)\Vert_{L^\infty}\lesssim\epsilon$ under the bootstrap assumption \eqref{bootstrap}. To prove this, we have to prove the following:
\begin{prop}
Let $u$ be a solution to \eqref{HO} satisfying \eqref{small} and \eqref{bootstrap}, and $f(t)=S(-t)u(t).$ Then $\hat{f}$ satisfies the equation \begin{equation}\label{hatf}
\partial_t\hat{f}(t,\xi)=\frac{t^{-\frac{p-1}{2}}\xi}{|\xi|^{\frac{(p-1)(n-2)}{2}}}\sum_{\substack{0\leq j\leq p-1\\j\neq \frac{p-2}{2}}}c_je^{id_j t\xi^n}\hat{f}(t,-\frac{\xi}{p-2j-2})^{j+1}\hat{f}(t,\frac{\xi}{p-2j-2})^{p-j-1}\mathbf{1}_{|\xi|>t^{-\frac{1}{n}}}+R(t,\xi),
\end{equation} for some constants $c_j\in\mathbb{C}$ and $d_j\in\mathbb{R}$ in the time scale $|t|\ll_M \epsilon^{-\frac{n(p-1)}{n-p}}$. In particular, if $p$ is odd and $j=\frac{p-1}{2}$ or $\frac{p-3}{2}$, then $c_j$ is purely imaginary and $d_j=0$. Moreover, the remainder term $R(t,\xi)$ satisfies \begin{equation}\label{errorbd}
\Vert R(\cdot,\xi)\Vert_{L_t^1}\lesssim\epsilon
\end{equation} uniformly in $\xi$.
\end{prop}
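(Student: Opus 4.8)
The plan is to pass to the Fourier side, isolate the resonant frequency configurations of the $p$-fold convolution, and split the resulting oscillatory integral into the explicit stationary-phase sum in \eqref{hatf} and an $L^1_t$-controllable remainder. Since $f=S(-t)u$, equation \eqref{HO} (written with $u^{p-1}\partial_x u=\tfrac1p\partial_x(u^p)$) gives $\partial_t\hat f(t,\xi)=\tfrac{i\xi}{p}e^{-it\xi^n}\widehat{u^p}(t,\xi)$; expanding the convolution and using $\widehat u(t,\eta)=e^{it\eta^n}\hat f(t,\eta)$ yields
\[
\partial_t\hat f(t,\xi)=\frac{i\xi}{p}\int_{\eta_1+\cdots+\eta_p=\xi}e^{it\phi}\prod_{k=1}^{p}\hat f(t,\eta_k)\,d\eta_1\cdots d\eta_{p-1},\qquad \phi=\sum_{k=1}^{p}\eta_k^n-\xi^n,
\]
with $\eta_p=\xi-\eta_1-\cdots-\eta_{p-1}$. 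Substituting $\eta_k=|\xi|\zeta_k$ makes $\phi=|\xi|^n\tilde\phi(\zeta_1,\dots,\zeta_{p-1})$ with $\tilde\phi$ depending only on the rescaled variables, so the oscillation parameter is $\lambda:=t|\xi|^n$, and $\lambda\gtrsim1$ exactly on the region $|\xi|>t^{-1/n}$ appearing in \eqref{hatf}.

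Since $n-1$ is even, $\nabla_\zeta\tilde\phi=0$ forces $\zeta_k^{n-1}=\zeta_p^{n-1}$, i.e. $\zeta_k=\pm\zeta_p$ for all $k$; for a fixed sign of $\xi$ the critical configurations are thus classified by how the $p$ frequencies split into positive and negative ones. The split with $p-j-1$ positive and $j+1$ negative frequencies has $|\zeta_p|=|p-2j-2|^{-1}$, carries the monomial $\hat f(\tfrac{\xi}{p-2j-2})^{\,p-j-1}\hat f(-\tfrac{\xi}{p-2j-2})^{\,j+1}$, and has stationary phase value $\phi|_{\mathrm{crit}}=\big((p-2j-2)^{-(n-1)}-1\big)\xi^n=:d_j\xi^n$. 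The split with $p-2j-2=0$ (only for $p$ even, $j=\tfrac{p-2}{2}$) forces $\xi=0$ and is dropped, which is the restriction on the sum. When $|p-2j-2|=1$, possible only for $p$ odd at $j=\tfrac{p-1}2$ and $j=\tfrac{p-3}2$, one has $d_j=0$ — the genuine space–time resonances — and there the accompanying constant is purely imaginary, either by computing the signature of the Hessian, or, more transparently, because with $p-2j-2=\mp1$ and the reality relation $\hat f(-\xi)=\overline{\hat f(\xi)}$ the corresponding term of \eqref{hatf} reduces to $i\times(\text{real})\times|\hat f(t,\xi)|^{p-1}\hat f(t,\xi)$, a pure phase rotation of $\hat f(t,\xi)$ — which is what lets the subsequent $L^\infty$ bootstrap close, with no logarithmic loss when $p=3$. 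From $\partial_{\zeta_i}\partial_{\zeta_j}\tilde\phi=n(n-1)\big(\zeta_i^{n-2}\delta_{ij}+\zeta_p^{n-2}\big)$ one checks that every critical point is isolated, distinct, and nondegenerate except in the dropped case; summing the corresponding stationary-phase contributions (the multiplicities, the Fresnel constants and the factor $\tfrac ip$ all absorbed into $c_j$, while $\lambda^{-(p-1)/2}|\xi|^{p-1}$ produces $t^{-(p-1)/2}|\xi|^{-(p-1)(n-2)/2}$) gives the explicit sum in \eqref{hatf}.

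On $|\xi|>t^{-1/n}$ I insert a smooth partition of unity on the constraint hyperplane subordinate to small balls about the finitely many critical points and to their complement. Near each critical point, stationary phase in $p-1$ variables with parameter $\lambda$ extracts the corresponding summand of \eqref{hatf} plus a remainder; since only $\|\hat f(t)\|_{L^\infty}\lesssim\epsilon_1$ and $\|\partial_\xi\hat f(t)\|_{L^2}\lesssim t^{1/2n}\epsilon_1$ are available — no second derivative of $\hat f$ — the remainder is estimated by the low-regularity form of stationary phase, placing $\partial_\zeta\hat f$ in $L^2$ via Cauchy–Schwarz and the other $\hat f$-factors in $L^\infty$, or, when cheaper, using the linear decay bounds \eqref{pointwise}–\eqref{lp}. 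On the non-resonant piece I integrate by parts once using $|\nabla_\zeta\tilde\phi|\gtrsim1$, which again produces either a full gain $\lambda^{-1}$ or a factor $\partial_\zeta\hat f$ handled as above. On the remaining region $|\xi|\le t^{-1/n}$ there is no leading-order oscillation: there I bound $|\partial_t\hat f(t,\xi)|\le\tfrac{|\xi|}p\|u(t)\|_{L^p}^p$ and use \eqref{lp} with $\beta=0$ (legitimate since $p\cdot\tfrac{n-2}{2n-2}>1$ for $p\ge3$, $n\ge5$), while for $p=2$ I use mass conservation $\|u(t)\|_{L^2}=\|u_0\|_{L^2}\lesssim\epsilon$ on the part of the convolution with both frequencies $\lesssim t^{-1/n}$ and integrate by parts once on the complementary part, where $\partial_\eta\phi=O(\xi)$ recovers the $\xi$-prefactor. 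In every case the contribution to $R$ is $O(\epsilon_1^{p}t^{-\gamma})$ for a suitable $\gamma$, and the relations $\epsilon_1^p<\epsilon t^{-(n-p)/n}$, $\epsilon_1^{p+1}<\epsilon^2 t^{-(n-p)/n}$ together with the time restriction $|t|\ll_M\epsilon^{-n(p-1)/(n-p)}$ make each such term integrate in $t$ to $\lesssim\epsilon$, uniformly in $\xi$; this is \eqref{errorbd}.

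The crux is the remainder estimate. With only one derivative of $\hat f$ available, in $L^2$ and with a $t^{1/2n}$ loss, one can integrate by parts essentially only once, so the oscillatory gain must be balanced against this loss and against the slowly decaying $t^{-(p-1)/2}$ scale of the main term — this is exactly where the hypothesis $p<n$ and the precise time scale are used — and the low-frequency region for $p=2$, where the crude bound fails and one must exploit the cancellation between the $\xi$-prefactor and the small size of $\partial_\eta\phi$, is the most delicate point.
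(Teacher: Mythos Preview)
Your overall strategy — pass to the profile $f$, identify the stationary points of the $(p-1)$-fold phase, extract the stationary-phase sum, and throw the rest into an $L^1_t$ remainder — matches the paper. But two points separate your outline from a proof, and they are exactly where the paper spends its effort.

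\textbf{The high–frequency region.} After rescaling by $|\xi|$ you insert a partition of unity into small balls around the critical points and ``their complement'', then say you integrate by parts once on the complement. The complement is unbounded: it contains all configurations with some $|\eta_k|\gg|\xi|$. A single integration by parts there, with the remaining $\hat f$–factors placed in $L^\infty_\xi$, does not produce a convergent $\zeta$–integral, let alone the needed time decay. The paper does not use a fixed partition of unity; it takes a full Littlewood--Paley decomposition in the input frequencies $\xi_1,\dots,\xi_{p-1}$ and distinguishes four regimes: (i) all frequencies $\lesssim t^{-1/n}$, (ii) $|\xi|>t^{-1/n}$ with all inputs $\lesssim|\xi|$, (iii) one input $\gg$ all others, (iv) two or more inputs $\gg|\xi|$ and comparable. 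Only (ii) is your ``near the critical points'' case; (iii) and (iv) are the unbounded piece you left vague. In (iii) the paper integrates by parts once but then estimates the resulting multilinear expression by the pseudo--product/Coifman--Meyer Lemma~4.6 together with the symbol bound of Lemma~4.8, placing factors in physical--space norms $L^2,L^\infty$ (and using $\|xf_{\lesssim k}\|_{L^2}$). In (iv) the paper uses the nonstationary case of the higher--dimensional stationary phase Lemma~4.5 at scale $2^{k_1}$. These tools are not optional refinements; without a dyadic scale and without the pseudo--product lemma the far region does not close.

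\textbf{The remainder near the critical points.} You invoke a ``low--regularity form of stationary phase'' with $\partial_\zeta\hat f$ in $L^2$ by Cauchy--Schwarz. The paper's Lemma~4.5 is more specific: the error is $O\big(\lambda^{-(p-1)/2-\rho}\,\|\langle x\rangle^{2\rho}\hat F\|_{L^1}\big)$, i.e.\ it is governed by weighted \emph{physical--space} $L^1$ norms of the amplitude, and Lemma~4.7 supplies the bounds $\|f_{\lesssim j}\|_{L^1}\lesssim(2^jt^{1/n})^{1/4-\alpha/2}\epsilon_1$ and $\||x|^{2\rho}f_{\lesssim j}\|_{L^1}\lesssim 2^{-2\rho j}(2^jt^{1/n})^{1/4+\rho}\epsilon_1$ needed to feed it. The exponent bookkeeping that makes $R$ integrable (the paper's parameter $\kappa=(n-1)\rho+(\tfrac n2-\tfrac14)(p-1)-\tfrac14>p$ for $\rho$ near $\tfrac14$) goes through these $L^1$ norms, not through a bare Cauchy--Schwarz on the Fourier side.

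A smaller point: your reality argument for $c_j\in i\mathbb R$ at $j=\tfrac{p-1}{2},\tfrac{p-3}{2}$ is suggestive but not a proof — it shows the product of $\hat f$'s collapses to $|\hat f|^{p-1}\hat f$, not that the accompanying Fresnel constant is real. The paper proves this by computing the Hessian signature explicitly (Lemma~4.3) and showing it vanishes.
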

\begin{rmk}
The case when $n=p=3$ and $n=p=5$ are exactly mentioned in \cite{germain2016asymptotic} and \cite{wang2021global}, respectively.
\end{rmk}
First note that $f$ satisfies the equation \begin{align*}\partial_t\hat{f}(t,\xi)=i\xi\int e^{-itH}\hat{f}(t,\xi_1)\cdots\hat{f}(t,\xi_p)d\xi_1\cdots d\xi_{p-1}\\\end{align*} with $H:=H(\xi,\xi_1,\ldots,\xi_{p-1})=\xi^n-\xi_1^n-\cdots-\xi_{p}^n$, $k$ being a real constant, and $\xi_p=\xi-\xi_1-\cdots-\xi_p.$ Then the stationary points of the integral on the right hand side, namely the solutions of $\nabla H_{(\xi_1,\ldots,\xi_{p-1})}=0,$ are given as the permutations of the following points:\begin{align*}
    \xi_{(j)}=(\overbrace{-\xi_{j,p},\ldots,-\xi_{j,p}}^{j \text{ entries}},\overbrace{\xi_{j,p},\ldots,\xi_{j,p}}^{p-j-1\text{ entries}}),\quad 0\leq j\leq p-1,\quad \xi_{j,p}=\frac{\xi}{p-2j-2}
\end{align*} provided $j\neq \frac{p-2}{2}.$ Now the stationary phase argument yields the principal terms of \eqref{hatf}.\\
Also note that there is no stationary point with $j= \frac{p-2}{2}.$ Also a direct calculation shows that \[H(\xi,\xi_{(j)})=\left(1-\frac{1}{(p-2j-2)^{n-1}}\right)\xi^n,\] so the points $\xi_{(j)}$ (and their permutations) are time-resonant if and only if  $p-2j-2=\pm1,$ or equivalently $j=\frac{p-1}{2},\;\frac{p-3}{2}.$ Hence the space-time resonance set is nonempty only when $p$ is odd. More precisely, if $p$ is odd,\begin{equation}
    \partial_tf(t,\xi)=ic\frac{t^{-\frac{p-1}{2}}\xi}{|\xi|^{\frac{(p-1)(n-2)}{2}}}|\hat{f}(t,\xi)|^{p-1}\hat{f}(t,\xi)\mathbf{1}_{|\xi|>t^{-\frac{1}{n}}}+(\text{oscillatory terms})+ R(t,\xi),
\end{equation} where `oscillatory terms' are the terms in \eqref{hatf} with $j\neq \frac{p-1}{2},\frac{p-3}{2}.$ Here it is not trivial that $c$ is indeed a real number. This fact is a direct consequence of the lemma below and the stationary phase formula.
\begin{lem}
$\mathrm{sign}\mathrm{Hess}H(\xi,\xi_{(j)})=0$ when $p$ is odd and $j=\frac{p-1}{2},\;\frac{p-3}{2}.$
\end{lem}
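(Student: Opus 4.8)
The plan is to reduce the statement to an inertia computation for a nondegenerate quadratic form restricted to a hyperplane. Write $\Phi(\eta_1,\dots,\eta_p):=-(\eta_1^n+\cdots+\eta_p^n)$ on $\mathbb{R}^p$; up to the additive constant $\xi^n$, the phase $H(\xi,\xi_1,\dots,\xi_{p-1})$ is the restriction of $\Phi$ to the affine hyperplane $A_\xi:=\{\eta_1+\cdots+\eta_p=\xi\}$, read in the coordinates $(\xi_1,\dots,\xi_{p-1})$ with $\eta_p=\xi-\xi_1-\cdots-\xi_{p-1}$. Hence $\mathrm{Hess}\,H$ is, in any linear parametrization of $A_\xi$, the Hessian of $\Phi|_{A_\xi}$ — an intrinsic symmetric bilinear form on the tangent space $T:=\{v\in\mathbb{R}^p:v_1+\cdots+v_p=0\}$ — and because the coordinate permutations are linear isometries of $\mathbb{R}^p$ fixing both $A_\xi$ and $\Phi$, its signature does not change if we permute the $\eta_i$. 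A short computation from $\xi_{j,p}=\xi/(p-2j-2)$ and $\eta_p=\xi-\sum_{i<p}\xi_i$ shows that for both $j=\frac{p-1}{2}$ and $j=\frac{p-3}{2}$ the stationary point $\xi_{(j)}$ is, up to such a permutation, the single point $\bar\eta$ whose coordinates are $\frac{p+1}{2}$ copies of $\xi$ and $\frac{p-1}{2}$ copies of $-\xi$; so it suffices to compute the signature of $\mathrm{Hess}(\Phi|_{A_\xi})$ at $\bar\eta$.

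Next I would write that Hessian down. Since $\Phi$ is a function of separated variables, $D^2\Phi(\bar\eta)(v,v)=-n(n-1)\sum_{i=1}^p\bar\eta_i^{\,n-2}v_i^2$, and restricting the quadratic function $\Phi$ to $A_\xi$ merely restricts this form to $v\in T$. As $n$ is odd, $n-2$ is odd, so $\bar\eta_i^{\,n-2}=\mathrm{sign}(\bar\eta_i)\,|\xi|^{n-2}$; writing $P$ and $N$ for the index sets on which $\bar\eta_i=\xi$ and $\bar\eta_i=-\xi$, one gets that $\mathrm{Hess}\,H$ (as a form on $T$) equals $-n(n-1)\xi^{n-2}$ times the restriction to $T$ of the quadratic form $q(v):=\sum_{i\in P}v_i^2-\sum_{i\in N}v_i^2$. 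On the support of the stationary-phase contribution one has $|\xi|>t^{-1/n}>0$, so the scalar $-n(n-1)\xi^{n-2}$ is nonzero and $\mathrm{sign}\,\mathrm{Hess}\,H(\xi,\xi_{(j)})=\pm\,\mathrm{sign}(q|_T)$. Thus the lemma is equivalent to $\mathrm{sign}(q|_T)=0$.

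Finally I would verify $\mathrm{sign}(q|_T)=0$. As a form on all of $\mathbb{R}^p$, $q$ is nondegenerate and diagonal with $|P|$ entries $+1$ and $|N|$ entries $-1$, so $\mathrm{sign}(q)=|P|-|N|=\frac{p+1}{2}-\frac{p-1}{2}=1$. Now $T=\mathbf 1^{\perp}$ has one-dimensional $q$-orthogonal complement, and that complement is spanned by $w_0:=\mathbf 1_P-\mathbf 1_N$, the vector equal to $+1$ on $P$ and $-1$ on $N$: the associated bilinear form gives $B_q(w_0,v)=\sum_{i\in P}v_i+\sum_{i\in N}v_i=\sum_{i=1}^p v_i=0$ for every $v\in T$. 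Since the coordinate sum of $w_0$ is $|P|-|N|=1\neq0$ we have $w_0\notin T$, and since $q(w_0)=|P|-|N|=1$ the decomposition $\mathbb{R}^p=T\oplus\mathbb{R}w_0$ is $q$-orthogonal with $q$ positive on the second summand; additivity of the signature then forces $1=\mathrm{sign}(q|_T)+1$, i.e.\ $\mathrm{sign}(q|_T)=0$. This proves $\mathrm{sign}\,\mathrm{Hess}\,H(\xi,\xi_{(j)})=0$, and incidentally that these space-time resonant stationary points are nondegenerate for $\xi\neq0$.

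None of these steps is deep — the content is linear algebra — and the only places demanding care are: checking that the resonant stationary points $\xi_{(j)}$ with $j=\frac{p-1}{2},\frac{p-3}{2}$ are the single point $\bar\eta$ up to a coordinate permutation (and that passing to the intrinsic Hessian of $\Phi|_{A_\xi}$ legitimizes such permutations); using the parity of $n$ to fix the signs $\bar\eta_i^{\,n-2}=\pm|\xi|^{n-2}$; and correctly identifying the $q$-orthogonal complement of $\mathbf 1^{\perp}$. The easiest thing to get wrong is to confuse the Euclidean orthogonal complement of $\mathbf 1$ with its $q$-orthogonal complement: these are distinct hyperplanes, and the latter is spanned by $w_0$, not by $\mathbf 1$.
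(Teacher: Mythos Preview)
Your proof is correct and takes a genuinely different route from the paper. The paper writes down the explicit $(p-1)\times(p-1)$ Hessian matrices in the coordinates $(\xi_1,\dots,\xi_{p-1})$, treats the cases $j=\frac{p-1}{2}$ and $j=\frac{p-3}{2}$ separately, and determines the full spectrum of each: it finds the eigenvalues $\pm1$ each with multiplicity $\frac{p-3}{2}$, and then reduces the remaining two eigenvalues to the roots of $t^2-(p-1)t-1=0$, which have opposite signs. Your argument is more intrinsic: you recognize $H$ as the restriction of the separable function $\Phi(\eta)=-\sum\eta_i^n$ to the hyperplane $\{\sum\eta_i=\xi\}$, observe that the two resonant stationary points coincide up to a coordinate permutation (so one computation suffices), and then compute the signature of the diagonal form $q=\sum_{P}v_i^2-\sum_{N}v_i^2$ on the hyperplane $T=\mathbf 1^\perp$ by splitting off its one-dimensional $q$-orthogonal complement $\mathbb{R}w_0$ and invoking additivity of the signature. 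Your approach is cleaner and unifies the two cases at once; the paper's approach, by contrast, actually produces the eigenvalues, which would matter if one ever needed the precise constant $|\det\mathrm{Hess}\,H|$ or higher-order terms in the stationary-phase expansion.
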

\begin{proof}
A direct calculation shows that when $j=\frac{p-1}{2},$
\begin{equation}
    M_1:=\mathrm{Hess}(\xi,\xi_{(j)})=-n(n-1)\xi^{n-2}\begin{pmatrix}2&1&\cdots&&&1\\1&\ddots&&&&\vdots\\\vdots&&2&&&\\&&&0&&\\&&&&\ddots&1\\1&\cdots&&&1&0\end{pmatrix}\end{equation} where the number of 2's and 0's are both $\frac{p-1}{2}$ and off-diagonal components are all 1. Similarly, when $j=\frac{p-3}{2},$
\begin{equation}
    M_2:=\mathrm{Hess}(\xi,\xi_{(j)})=n(n-1)\xi^{n-2}\begin{pmatrix}0&1&\cdots&&&1\\1&\ddots&&&&\vdots\\\vdots&&0&&&\\&&&2&&\\&&&&\ddots&1\\1&\cdots&&&1&2\end{pmatrix}\end{equation} where the number of 0's is $\frac{p+1}{2}$ and the number of 2's is $\frac{p-3}{2}.$
Now find the eigenvalues of $M_1$. First note that the upper $\frac{p-1}{2}$ rows of $M_1+I$ are identical, so the equation $(M_1+I)x=0$ has trivial solutions and $-1$ is an eigenvalue of $M_1$ with the eigenspace \[E_1=\{(\xi_1,\ldots,\xi_{\frac{p-1}{2}},0,\ldots,0)\in\mathbb{R}^{p-1}:\sum_{j=1}^{\frac{p-1}{2}}\xi_j=0\}.\] Hence the eigenvalue $-1$ has multiplicity\footnote{Strictly speaking, one should mention whether the multiplicity is algebraic or geometric. However, since we are dealing with symmetric matrices, the two are the same.} $\frac{p-3}{2}$. In the similar way, one may find that $1$ is the eigenvalue of $M_1$ with multiplicity $\frac{p-3}{2}$.\\Now suppose $t\neq\pm 1.$ Then the solution of $(M_1-tI)x=0$ must satisfy $x_j=x_k$ whenever $1\leq j,k\leq \frac{p-1}{2}$ or $\frac{p+1}{2}\leq j,k\leq p-1.$ Thus we may let $x_j=a$ for $1\leq j\leq \frac{p-1}{2}$ and $x_j=b$ for $\frac{p+1}{2}\leq j\leq p-1.$ Then the equation reduces to the following linear equation with two variables:\[\begin{cases}(\frac{p+1}{2}-t)a+\frac{p-1}{2}b=0,\\\frac{p-1}{2}a+(\frac{p-3}{2}-t)b=0.\end{cases}\]To guarantee that this equation has a nontrivial solution, we must have \[(\frac{p+1}{2}-t)(\frac{p-3}{2}-t)-(\frac{p-1}{2})^2=0,\] which is equivalent to $t^2-(p-1)t-1=0.$ Since this quadratic equation has two real roots with opposite sign, we may finally conclude that $M_1$ has the same number of positive and negative eigenvalues(counting with multiplicity), so $\mathrm{sign}M_1=0.$ \\Since finding eigenvalues of $M_2$ is completely similar, we omit the details.
\end{proof}

\begin{proof}[Proof of \eqref{nts} under Proposition 4.1] Let \[\hat{w}(t,\xi)=e^{-iB(t,\xi)}\hat{f}(t,\xi),\quad B(t,\xi)=\frac{c\xi}{|\xi|^{\frac{(p-1)(n-2)}{2}}}\int_1^t |\hat{f}(s,\xi)|^{p-1}\frac{ds}{s^{\frac{p-1}{2}}}.\] If $p$ is even, then we may simply let $c=0$, so such transformation is nontrivial only when $p$ is odd. Observe that $\hat{w}$ satisfies the equation
\begin{align*}\partial_t \hat{w}(t,\xi)=e^{-iB(t,\xi)}[\frac{t^{-\frac{p-1}{2}}\xi}{|\xi|^{\frac{(p-1)(n-2)}{2}}}\sum c_je^{id_j t\xi^n}\hat{f}(t,-\xi_{j,p})^{j+1}\hat{f}(t,\xi_{j,p})^{p-j-1}\mathbf{1}_{|\xi|>t^{-\frac{1}{n}}}+R(t,\xi)]\end{align*} where the summation is taken over a finite number of $j$'s. Hence one has \begin{align*}
    &|\hat{f}(t,\xi)|\lesssim |\hat{u}_0(\xi)|\\+ &\frac{1}{|\xi|^{\frac{(p-1)(n-2)}{2}-1}}\sum \left|\int_{|\xi|^{-n}}^t e^{id_js\xi^n}\hat{f}(s,-\xi_{j,p})^{j+1}\hat{f}(s,\xi_{j,p})^{p-j-1}\frac{ds}{s^{\frac{p-1}{2}}}\right|+\int |R(s,\xi)|ds\\\lesssim& \epsilon+\frac{1}{|\xi|^{\frac{(p-1)(n-2)}{2}-1}}\sum \left|\int_{|\xi|^{-n}}^t e^{id_js\xi^n}e^{-iB(s,\xi)}\hat{f}(s,-\xi_{j,p})^{j+1}\hat{f}(s,\xi_{j,p})^{p-j-1}\frac{ds}{s^{\frac{p-1}{2}}}\right|.
\end{align*} Hence it is enough to show that \begin{equation}\label{timeint}
|\xi|^{1-\frac{(p-1)(n-2)}{2}}\left|\int_{|\xi|^{-n}}^t e^{id_js\xi^n}\hat{f}(s,-\xi_{j,p})^{j+1}\hat{f}(s,\xi_{j,p})^{p-j-1}\frac{ds}{s^{\frac{p-1}{2}}}\right|\lesssim\epsilon
\end{equation} for each $j$.\\
If $p>2$, a standard integration by parts gives
\begin{align*}
&\int_{|\xi|^{-n}}^t e^{id_js\xi^n}e^{-iB(s,\xi)}\hat{f}(s,-\xi_{j,p})^{j+1}\hat{f}(s,\xi_{j,p})^{p-j-1}\frac{ds}{s^{\frac{p-1}{2}}}\\=&\frac{j+1}{id_j\xi^n}\int_{|\xi|^{-n}}^te^{id_js\xi^n}e^{-iB(s,\xi)}\partial_s\hat{f}(s,-\xi_{j,p})\hat{f}(s,-\xi_{j,p})^{j}\hat{f}(s,\xi_{j,p})^{p-j-1}\frac{ds}{s^{\frac{p-1}{2}}}\\-&\frac{p-j-1}{id_j\xi^n}\int_{|\xi|^{-n}}^te^{id_js\xi^n}e^{-iB(s,\xi)}\partial_s\hat{f}(s,\xi_{j,p})\hat{f}(s,-\xi_{j,p})^{j+1}\hat{f}(s,\xi_{j,p})^{p-j-2}\frac{ds}{s^{\frac{p-1}{2}}}\\+&\frac{p-1}{2id_j\xi^n}\int_{|\xi|^{-n}}^te^{id_js\xi^n}e^{-iB(s,\xi)}\hat{f}(s,-\xi_{j,p})^{j+1}\hat{f}(s,\xi_{j,p})^{p-j-1}\frac{ds}{s^{\frac{p-1}{2}}}\\+&\frac{1}{id_j\xi^n}\int_{|\xi|^{-n}}^te^{id_js\xi^n}i\partial_sB(s,\xi)e^{-iB(s,\xi)}\hat{f}(s,-\xi_{j,p})^{j+1}\hat{f}(s,\xi_{j,p})^{p-j-1}\frac{ds}{s^{\frac{p+1}{2}}}\\+&\frac{1}{id_j\xi^n}e^{id_js\xi^n}e^{-iB(s,\xi)}\hat{f}(s,-\xi_{j,p})^{j+1}\hat{f}(s,\xi_{j,p})^{p-j-1}\frac{1}{s^{\frac{p-1}{2}}}\bigg|_{s=|\xi|^{-n}}^{s=t}:=J_1+J_2+J_3+J_4+J_5.
\end{align*}
Here $J_1$ and $J_2$ can be estimated in the similar way, so we only estimate $J_1$.
\begin{align*}
&|\xi|^{1-\frac{(p-1)(n-2)}{2}}|J_1|\\\lesssim &\frac{\epsilon_1^{p-1}}{|\xi|^{n+\frac{(p-1)(n-2)}{2}-1}}\int_{|\xi|^{-n}}^t|\frac{\xi}{s^{\frac{p-1}{2}}|\xi|^{\frac{(p-1)(n-2)}{2}}}\sum c_je^{id_jt\xi^n}\hat{f}(t,-\xi_{j,p})^{j+1}\hat{f}(t,\xi_{j,p})^{p-j-1}+R(s,\xi)|\frac{ds}{s^{\frac{p-1}{2}}}\\\lesssim&\frac{\epsilon_1^{2p-1}}{|\xi|^{n+(p-1)(n-2)-2}}\int_{|\xi|^{-n}}^t\frac{ds}{s^{p-1}}+\frac{\epsilon_1^{p-1}}{|\xi|^{n+\frac{(p-1)(n-2)}{2}-1}}\int_{|\xi|^{-n}}^t|R(s,-\xi_{j,p})|\frac{ds}{s^{\frac{p-1}{2}}}\\\lesssim& \epsilon_1^{2p-1}|\xi|^{2p-2n}+\epsilon\epsilon_1^{p-1}|\xi|^{p-n}\leq\epsilon_1^{2p-1}t^{2(1-\frac{p}{n})}+\epsilon\epsilon_1^{p-1}t^{1-\frac{p}{n}}\lesssim \epsilon,
\end{align*} by \eqref{bootstrap}, \eqref{errorbd}. \\$J_3$ can be estimated as follows:
\begin{align*}
|\xi|^{1-\frac{(p-1)(n-2)}{2}}|J_3|\lesssim \frac{\epsilon_1^p}{|\xi|^{n+\frac{(p-1)(n-2)}{2}-1}}\int_{|\xi|^{-n}}^t \frac{ds}{s^{\frac{p+1}{2}}}\lesssim \epsilon_1^p |\xi|^{p-n}\lesssim \epsilon.
\end{align*} Similarly,
\begin{align*}
|\xi|^{1-\frac{(p-1)(n-2)}{2}}|J_4|\lesssim &\frac{\epsilon_1^p}{|\xi|^{n+(p-1)(n-2)-2}}\int_{|\xi|^{-n}}^t|\hat{f}(s,\xi)|^{p-1}\frac{ds}{s^{p-1}}\lesssim\frac{\epsilon_1^{2p-1}}{|\xi|^{n+(p-1)(n-2)-n(p-2)}}\\=&\epsilon_1^{2p-1}|\xi|^{2(p-n)}\lesssim \epsilon
\end{align*} for $p>2$, 
\\The bound for $J_5$ can be obtained in the similar way.\\
If $p=2$, then the stationary point emerges only when $j=1$. In that case \eqref{timeint} can be estimated directly, namely \begin{align*}
|\xi|^{\frac{4-n}{2}}\left|\int_{|\xi|^{-n}}^t e^{id_js\xi^n}\hat{f}(s,-\xi_{1,2})^{2}\frac{ds}{s^{\frac{1}{2}}}\right|\lesssim |\xi|^{\frac{4-n}{2}}\epsilon_1^2t^{\frac{1}{2}}\lesssim \epsilon_1^2 t^{1-\frac{2}{n}}\leq \epsilon.
\end{align*} This completes the proof of \eqref{nts} under Proposition 4.1.
\end{proof}
\begin{rmk}
In the case of $p=2$ and $n=3$, namely the KdV equation, one cannot apply the same argument as above, since we cannot bound $|\xi|^{\frac{1}{2}}$ by a proper time power under the assumption $|\xi|>t^{-\frac{1}{3}}.$ This is why Ifrim, Koch, and Tataru \cite{ifrim2019dispersive} had to adopt a totally different approach to prove the linear dispersive decay bound of small data solutions to KdV equations.
\end{rmk}
Now it remains to prove Proposition 4.1. To prove it, we need the following lemmas, which are the higher-dimensional generalizations of Lemma A.1 and A.2 in \cite{germain2016asymptotic}:
\begin{lem}[Stationary phase in any dimension] Let $\chi\in C_0^\infty(\mathbb{R}^d)$ be supported inside the ball $B(0,2)$ and $|\nabla \chi|+|\nabla^2\chi|\lesssim 1.$ Also let $\psi\in C^\infty(\mathbb{R}^d)$ satisfy $|\det\mathrm{Hess}\psi|\gtrsim 1$ and $|\nabla\psi|+|\nabla^2\psi|+|\nabla^3\psi|\lesssim 1$ on the support of $\chi$. Let \[I=\int_{\mathbb{R}^d} e^{i\lambda \psi(\eta)}F(\eta)\chi(\eta)d\eta.\] Then, for any $\alpha\in [0,1]$,
\begin{enumerate}[(i)]
    \item If $\nabla\psi$ only vanishes at $\eta=\eta_0,$
    \[I=\frac{2\pi e^{i\frac{\pi}{4}s}}{\sqrt{\Delta}}\frac{e^{i\lambda \psi(\eta_0)}}{\lambda^{\frac{d}{2}}}F(\eta_0)\chi(\eta_0)+O\left(\frac{\Vert \langle x\rangle^{2\alpha}\hat{F}\Vert_{L^1}}{\lambda^{\frac{d}{2}+\alpha}}\right),\] where $s=\mathrm{sign}\mathrm{Hess}\psi$ and $\Delta=|\det\mathrm{Hess}\psi|.$
    \item If $|\nabla\psi|\gtrsim 1$,
    \[I=O\left(\frac{\Vert \langle x\rangle^{\alpha}\hat{F}\Vert_{L^1}}{\lambda^{\frac{d}{2}+\alpha}}\right).\]
\end{enumerate}
\end{lem}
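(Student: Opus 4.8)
The plan is to turn the $d$-dimensional integral into a superposition of \emph{scalar} oscillatory integrals by Fourier-expanding the amplitude. With the convention $F(\eta)=(2\pi)^{-d}\int_{\mathbb{R}^d}\hat{F}(x)e^{ix\cdot\eta}\,dx$ and Fubini,
\[ I=(2\pi)^{-d}\int_{\mathbb{R}^d}\hat{F}(x)\,\mathcal{I}(x)\,dx,\qquad \mathcal{I}(x):=\int_{\mathbb{R}^d}e^{i\lambda\psi_x(\eta)}\chi(\eta)\,d\eta,\quad \psi_x:=\psi+\tfrac{x}{\lambda}\cdot\eta, \]
so the problem reduces to estimating $\mathcal{I}(x)$ uniformly in $x$, with the right $x$-weight, and then integrating against $|\hat{F}(x)|$. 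Since $\nabla\psi_x=\nabla\psi+\tfrac{x}{\lambda}$, three regimes arise: for $|x|\lesssim\lambda$ the critical-point structure of $\psi_x$ mirrors that of $\psi$ (one non-degenerate critical point near $\eta_0$ once $|x|/\lambda$ is small, and otherwise at most finitely many, uniformly separated, non-degenerate ones, because $|\det\mathrm{Hess}\,\psi|\gtrsim1$); for $|x|\gg\lambda$ one has $|\nabla\psi_x|\gtrsim|x|/\lambda$ and the integral is non-stationary; and there is the transitional band $|x|\sim\lambda$. The basic tool is a \emph{quantitative} $\mathbb{R}^d$ stationary-phase expansion whose leading term is the classical one and whose remainder is controlled by $\|\chi\|_{C^2}$, the bounds on $\nabla\psi,\nabla^2\psi,\nabla^3\psi$, and $\inf|\det\mathrm{Hess}\,\psi|$; as these are uniformly bounded for the family $\{\psi_x:|x|\lesssim\lambda\}$, the expansion holds uniformly there.

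For part (i), split $F=F(\eta_0)+(F-F(\eta_0))$. The contribution of $F(\eta_0)$ is $F(\eta_0)\int e^{i\lambda\psi}\chi\,d\eta$, a scalar stationary-phase integral with the single non-degenerate critical point $\eta_0$ and smooth amplitude $\chi$, so the classical expansion yields exactly the displayed main term plus $O(\lambda^{-d/2-1}|F(\eta_0)|)$, and $|F(\eta_0)|\lesssim\|\hat{F}\|_{L^1}\lesssim\|\langle x\rangle^{2\alpha}\hat{F}\|_{L^1}$ makes this error admissible. For the fluctuation, set $B(x):=\int e^{i\lambda\psi(\eta)}\bigl(e^{ix\cdot\eta}-e^{ix\cdot\eta_0}\bigr)\chi(\eta)\,d\eta$, so the fluctuation contributes $(2\pi)^{-d}\int\hat{F}(x)B(x)\,dx$; the crucial feature is that the amplitude $\bigl(e^{ix\cdot\eta}-e^{ix\cdot\eta_0}\bigr)\chi(\eta)$ \emph{vanishes at the critical point} $\eta_0$. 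Two bounds follow. First, $|B(x)|\lesssim\lambda^{-d/2}$ uniformly in $x$, by applying the uniform stationary/non-stationary estimates to $\int e^{i\lambda\psi_x}\chi$ and to $e^{ix\cdot\eta_0}\int e^{i\lambda\psi}\chi$ separately and using the triangle inequality. Second, $|B(x)|\lesssim\lambda^{-d/2-1}\langle x\rangle^{2}$: near $\eta_0$ write $\eta-\eta_0=A(\eta)\nabla\psi(\eta)$ with $A$ a smooth bounded matrix-valued function (using $|\det\mathrm{Hess}\,\psi|\gtrsim1$), away from $\eta_0$ use $\nabla\psi\neq0$, and integrate by parts once to trade the vanishing factor for a gain $\lambda^{-1}$, at the cost of a factor $\lesssim\|\bigl(e^{ix\cdot\eta}-e^{ix\cdot\eta_0}\bigr)\chi\|_{C^2}\lesssim\langle x\rangle^{2}$ (this step uses slightly more derivatives on $\chi$ and $\psi$ than literally stated when $d\geq2$, which is harmless below). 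Interpolating the two bounds gives $|B(x)|\lesssim\lambda^{-d/2-\alpha}\langle x\rangle^{2\alpha}$ for all $\alpha\in[0,1]$, hence $\bigl|(2\pi)^{-d}\int\hat{F}(x)B(x)\,dx\bigr|\lesssim\lambda^{-d/2-\alpha}\|\langle x\rangle^{2\alpha}\hat{F}\|_{L^1}$, the asserted remainder.

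For part (ii) there is no critical point of $\psi$, hence no main term; one needs only a uniform size estimate for $\mathcal{I}(x)$. In the range $|x|\ll\lambda$ the full phase $\lambda\psi+x\cdot\eta$ has gradient $\gtrsim\lambda$, so integrating by parts $k$ times gives decay $\lambda^{-k}$; in the band $|x|\sim\lambda$ any critical point of $\psi_x$ is non-degenerate, so stationary phase gives $\lambda^{-d/2}$ while $\langle x\rangle\sim\lambda$; and for $|x|\gg\lambda$ the gradient is $\gtrsim|x|$, giving $|x|^{-k}$. Choosing $k\geq d/2$ and integrating these bounds against $|\hat{F}(x)|$ yields $\lesssim\lambda^{-d/2-\alpha}\|\langle x\rangle^{\alpha}\hat{F}\|_{L^1}$. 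Note that here the amplitude entering the integrations by parts is merely $\chi$ — the factor $e^{ix\cdot\eta}$ now sits inside the phase — so no power of $\langle x\rangle$ is generated, which is exactly why the weight is $\langle x\rangle^{\alpha}$ rather than $\langle x\rangle^{2\alpha}$.

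The step I expect to be most delicate is the uniform quantitative stationary-phase lemma: one must verify that the remainder in the $\mathbb{R}^d$ expansion depends only on $\|\chi\|_{C^2}$ (or a few more derivatives), $\|\psi\|_{C^3}$, and $\inf|\det\mathrm{Hess}\,\psi|$, and that these stay bounded along the one-parameter family $\psi_x$, including when the critical point is displaced from $\eta_0$ or splits into several non-degenerate ones. The auxiliary point that more than two integrations by parts (and correspondingly a few more derivatives on $\chi,\psi$) may be needed when $d$ is large costs nothing in the application, where $\chi$ is a fixed smooth bump and the phases $\psi=\xi^n-\sum_i\xi_i^n$ are polynomials with all derivatives bounded on the compact support of $\chi$.
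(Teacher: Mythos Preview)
The paper does not actually prove this lemma; it simply states it as the higher-dimensional analogue of Lemmas~A.1--A.2 in \cite{germain2016asymptotic} and moves on. Your proposal follows precisely that strategy --- Fourier-expand $F$, reduce to the family $\mathcal{I}(x)=\int e^{i\lambda\psi_x}\chi\,d\eta$, establish two endpoint bounds on the discrepancy and interpolate --- so in spirit you are reproducing the intended argument, and part~(ii) is handled correctly.

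There is, however, a genuine slip in your derivation of the second endpoint bound $|B(x)|\lesssim\lambda^{-d/2-1}\langle x\rangle^{2}$ in part~(i). After you integrate by parts once via $\eta-\eta_0=A(\eta)\nabla\psi(\eta)$, you are left with $\lambda^{-1}\int e^{i\lambda\psi}\,\mathrm{div}\,c$, and this integral must itself be estimated by stationary phase at $\eta_0$. But the amplitude $\mathrm{div}\,c$ still carries the oscillation $e^{ix\cdot\eta}$, so the $d$-dimensional stationary-phase remainder costs $\|\mathrm{div}\,c\|_{C^N}\lesssim\langle x\rangle^{N+2}$ with $N=N(d)$ growing with the dimension, not $\langle x\rangle^{2}$. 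Interpolating $\lambda^{-d/2-1}\langle x\rangle^{N+2}$ against $\lambda^{-d/2}$ then produces $\langle x\rangle^{(N+2)\alpha}$ rather than $\langle x\rangle^{2\alpha}$, which is too weak.

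The fix is to keep $e^{ix\cdot\eta}$ inside the phase for this step as well. Apply stationary phase directly to $J(x)=\int e^{i\lambda\psi_x}\chi$ with the \emph{fixed} amplitude $\chi$: the critical point $\eta_0(x)$ of $\psi_x$ depends smoothly on $x/\lambda$ for $|x|\lesssim\lambda$, the remainder is $O(\lambda^{-d/2-1})$ \emph{uniformly in $x$}, and the leading term $M(x)$ satisfies $\nabla_x\bigl[\lambda\psi_x(\eta_0(x))\bigr]=\eta_0(x)=\eta_0+O(|x|/\lambda)$. A one-line Taylor expansion then gives $|M(x)-e^{ix\cdot\eta_0}M(0)|\lesssim\lambda^{-d/2-1}\langle x\rangle^{2}$, which is exactly the bound you need; for $|x|\gtrsim\lambda$ it follows trivially from your bound~(a'). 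With this adjustment your argument is complete.
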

\begin{lem}[Boundedness of pseudo-product operators] Assume that $m\in L^1(\mathbb{R}^d)$ satisfy \[\Vert \int_{\mathbb{R}^d}m(\eta)e^{ix\cdot\eta}d\eta\Vert_{L_x^1}\leq A\] for some $A>0$. Then, for any $p_j\in [1,\infty]$ with $1\leq j\leq d+1$ and $\sum_{j}p_j^{-1}=1,$ one has \[\left|\int_{\mathbb{R}^d}m(\eta)\left[\prod_{j=1}^d \hat{f}_j(\eta_j)\right]\hat{f}_{d+1}(-\sum_{j=1}^d\eta_j)d\eta\right|\lesssim A\prod_{j=1}^{d+1}\Vert f_j\Vert_{L^{p_j}}.\]
\end{lem}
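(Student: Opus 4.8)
The plan is to convert the multilinear integral on the Fourier side into a multilinear form in physical space by representing the multiplier $m$ through its (inverse) Fourier transform, and then to close with Hölder's inequality. Set $K(x):=\int_{\mathbb{R}^d}m(\eta)e^{ix\cdot\eta}\,d\eta$, so that the hypothesis is exactly $\Vert K\Vert_{L^1(\mathbb{R}^d)}\leq A$; since $m\in L^1$ as well, Fourier inversion gives $m(\eta)=(2\pi)^{-d}\int_{\mathbb{R}^d}K(y)e^{-iy\cdot\eta}\,dy$ for a.e.\ $\eta$.

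First I would substitute this representation of $m$ into the left-hand side and use Fubini to interchange the $y$- and $\eta$-integrations. Writing $\eta=(\eta_1,\dots,\eta_d)$ and $y=(y_1,\dots,y_d)$ with $\eta_j,y_j\in\mathbb{R}$, so that $y\cdot\eta=\sum_j y_j\eta_j$, and expanding $\hat f_{d+1}\!\big(-\sum_j\eta_j\big)=\int_{\mathbb{R}}f_{d+1}(z)e^{iz\sum_j\eta_j}\,dz$, the inner $\eta$-integral factors into a product of one-dimensional Fourier inversions $\int_{\mathbb{R}}\hat f_j(\eta_j)e^{i\eta_j(z-y_j)}\,d\eta_j=2\pi f_j(z-y_j)$. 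After the harmless normalization constants cancel, the left-hand side becomes
\[
\int_{\mathbb{R}^d}K(y)\left(\int_{\mathbb{R}}f_{d+1}(z)\prod_{j=1}^d f_j(z-y_j)\,dz\right)dy.
\]

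The estimate now follows by Hölder. For each fixed $y$, the exponents $p_1,\dots,p_{d+1}$ (with $\sum_j p_j^{-1}=1$) give
\[
\left|\int_{\mathbb{R}}f_{d+1}(z)\prod_{j=1}^d f_j(z-y_j)\,dz\right|\leq \Vert f_{d+1}\Vert_{L^{p_{d+1}}}\prod_{j=1}^d\Vert f_j(\cdot-y_j)\Vert_{L^{p_j}}=\prod_{j=1}^{d+1}\Vert f_j\Vert_{L^{p_j}},
\]
where the key point is that translation leaves each $L^{p_j}$ norm unchanged, so the bound is uniform in $y$. Pulling this constant out and integrating $|K(y)|$ over $\mathbb{R}^d$ yields $\lesssim\Vert K\Vert_{L^1}\prod_{j=1}^{d+1}\Vert f_j\Vert_{L^{p_j}}\leq A\prod_{j=1}^{d+1}\Vert f_j\Vert_{L^{p_j}}$, as claimed.

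This is a routine higher-dimensional version of Lemma A.2 of \cite{germain2016asymptotic}; none of the structure changes with $d$. The only mild technical points are the bookkeeping of the Fourier normalizations (they merely affect a multiplicative constant, which in fact equals $1$ with the convention above) and the justification of Fubini, which one handles by first taking the $f_j$ to be Schwartz so that every integral converges absolutely and then passing to general $f_j\in L^{p_j}$ by density, using the uniform bound just obtained. Accordingly, I do not expect a genuine obstacle: the one point to watch is that it is precisely the translation-invariance of the $L^{p_j}$ norms that lets the $y$-integral decouple and be absorbed into $\Vert K\Vert_{L^1}$.
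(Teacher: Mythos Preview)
Your proof is correct and is exactly the standard argument. The paper does not give its own proof of this lemma; it simply states it as the higher-dimensional analogue of Lemma~A.2 in \cite{germain2016asymptotic}, and your write-up is precisely that argument carried out in $d$ variables.
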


\begin{lem}
Assume that $f$ satisfy \eqref{bootstrap}. Then for any $\alpha\in[0,\frac{1}{2}]$ we have the following estimates:
\begin{align}\label{freqloc}
\begin{split}
\Vert \hat{f_j}\Vert_{L^\infty}&\lesssim\epsilon_1,\\\Vert xf_j\Vert_{L^2}&\lesssim [2^{-\frac{j}{2}}+t^{\frac{1}{2n}}\min(1,(2^jt^{\frac{1}{n}})^{-\alpha})]\epsilon_1,\\\Vert f_j\Vert_{L^1}&\lesssim (1+(2^jt^{\frac{1}{n}})^{\frac{1}{4}})\epsilon_1,\\\Vert f_j\Vert_{L^1}&\lesssim(1+(2^jt^{\frac{1}{n}})^{\frac{1}{4}-\frac{\alpha}{2}})\epsilon_1,\\\Vert|x|^{2\rho}f_j\Vert_{L^1}&\lesssim\Vert f_j\Vert_{L^2}^{\frac{1}{2}-2\rho}\Vert xf_j\Vert_{L^2}^{\frac{1}{2}+2\rho}\lesssim 2^{-2\rho j}(1+(2^jt^{\frac{1}{n}})^{\frac{1}{4}+\rho})\epsilon_1,\quad 0\leq \rho<\frac{1}{4}.
\end{split}
\end{align}
Moreover, if $2^j\geq t^{-\frac{1}{n}},$ then $f_{<j}$ satisfies
\begin{align}\label{highfreq}
\begin{split}
\Vert f_{<j}\Vert_{L^1}&\lesssim (2^jt^{\frac{1}{n}})^{\frac{1}{4}-\frac{\alpha}{2}}\epsilon_1,\\\Vert|x|^{2\rho}f_{<j}\Vert_{L^1}&\lesssim 2^{-2\rho j}(2^jt^{\frac{1}{n}})^{\frac{1}{4}+\rho}\epsilon_1,\quad 0\leq \rho<\frac{1}{4}.
\end{split}
\end{align}
\end{lem}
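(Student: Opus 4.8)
For step (0), observe that everything in the statement is meant to follow softly from the three bounds packaged in \eqref{bootstrap} — $\Vert\hat f\Vert_{L^\infty}\lesssim\epsilon_1$, $\Vert xf\Vert_{L^2}\lesssim t^{\frac1{2n}}\epsilon_1$, and $\Vert f\Vert_{H^{\frac{n-1}2}}=\Vert u\Vert_{H^{\frac{n-1}2}}\lesssim\epsilon_1$ — together with Littlewood--Paley theory and two elementary weighted interpolation inequalities. The plan is: (1) prove the $L^\infty$ and $L^2$-level bounds for $\hat{f_j}$, $f_j$, and $xf_j$ (this is where the one non-soft ingredient sits); (2) deduce the $L^1$ and weighted-$L^1$ bounds from (1) by interpolation; (3) rerun the argument for the low-pass block $f_{<j}$.

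For step (1): since the Littlewood--Paley symbol $\psi(2^{-j}\cdot)$ is bounded by $1$, $\Vert\hat{f_j}\Vert_{L^\infty}\le\Vert\hat f\Vert_{L^\infty}\lesssim\epsilon_1$, and since it is supported on an interval of length $\sim2^j$, $\Vert f_j\Vert_{L^2}=\Vert\psi(2^{-j}\cdot)\hat f\Vert_{L^2}\lesssim2^{\frac j2}\Vert\hat f\Vert_{L^\infty}\lesssim2^{\frac j2}\epsilon_1$. For $\Vert xf_j\Vert_{L^2}$ write $\widehat{xf_j}=i\,\partial_\xi\bigl(\psi(2^{-j}\xi)\hat f(\xi)\bigr)$: the term where $\partial_\xi$ lands on the cutoff contributes $\lesssim2^{-j}\Vert\psi'(2^{-j}\cdot)\Vert_{L^2}\Vert\hat f\Vert_{L^\infty}\lesssim2^{-\frac j2}\epsilon_1$, and the term where $\partial_\xi$ lands on $\hat f$ contributes $\le\Vert\partial_\xi\hat f\Vert_{L^2}=\Vert xf\Vert_{L^2}\lesssim t^{\frac1{2n}}\epsilon_1$, which already gives the $\alpha=0$ instance of the $xf_j$ bound. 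The genuine content is the refinement $t^{\frac1{2n}}\to t^{\frac1{2n}}\min(1,(2^jt^{\frac1n})^{-\alpha})$ at frequencies $2^j\gtrsim t^{-\frac1n}$: here one must step outside the bootstrap norm and use the equation, inserting the Duhamel representation $\hat f(t,\xi)=\hat f(1,\xi)+\tfrac{i\xi}{p}\int_1^t e^{-is\xi^n}\widehat{u^p}(s,\xi)\,ds$ (equivalently, working with the auxiliary variable $y$ of Section~3 through $\widehat{xf}=e^{-it\xi^n}\hat y-\tfrac np t\,e^{-it\xi^n}\widehat{u^p}$), differentiating in $\xi$, localizing to $|\xi|\sim2^j$, and estimating the resulting pieces in $L^2$ with the linear decay bounds \eqref{pointwise}--\eqref{lp} and the high-frequency gain $\Vert P_j(u^p)\Vert_{L^2}\lesssim2^{-\frac{(n-1)j}2}\Vert u^p\Vert_{H^{\frac{n-1}2}}\lesssim2^{-\frac{(n-1)j}2}\epsilon_1^p$ (valid since $H^{\frac{n-1}2}$ is an algebra for $n\ge3$); the extra powers of $2^{-j}$ available once $2^j\gg t^{-\frac1n}$ are what manufacture the factor $(2^jt^{\frac1n})^{-\alpha}$. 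I expect this refinement to be the main obstacle; the rest of the lemma is bookkeeping.

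For step (2): record the two interpolation inequalities $\Vert g\Vert_{L^1}\lesssim\Vert g\Vert_{L^2}^{\frac12}\Vert xg\Vert_{L^2}^{\frac12}$ and, for $0\le\rho<\tfrac14$, $\Vert|x|^{2\rho}g\Vert_{L^1}\lesssim\Vert g\Vert_{L^2}^{\frac12-2\rho}\Vert xg\Vert_{L^2}^{\frac12+2\rho}$; both follow by cutting $\int|x|^{2\rho}|g|$ at $|x|=R$, bounding the inner piece by $\Vert g\Vert_{L^2}R^{2\rho+\frac12}$ via Cauchy--Schwarz and the outer piece by $\Vert xg\Vert_{L^2}R^{2\rho-\frac12}$ using the integrability of $|x|^{4\rho-2}$ at infinity (this is where $\rho<\tfrac14$ enters), and then optimizing in $R$, the exponents being forced by scaling. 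Applying the first with $g=f_j$, $\Vert f_j\Vert_{L^2}\lesssim2^{\frac j2}\epsilon_1$, and the two cases of the $xf_j$ bound — $\lesssim2^{-\frac j2}\epsilon_1$ for $2^j\lesssim t^{-\frac1n}$ and $\lesssim t^{\frac1{2n}}(2^jt^{\frac1n})^{-\alpha}\epsilon_1$ for $2^j\gtrsim t^{-\frac1n}$ — yields $\Vert f_j\Vert_{L^1}\lesssim(1+(2^jt^{\frac1n})^{\frac14-\frac\alpha2})\epsilon_1$, and taking $\alpha=0$ gives the first $L^1$ line. Applying the second inequality with the same inputs (here the $\alpha=0$ bound for $xf_j$ already suffices) gives $\Vert|x|^{2\rho}f_j\Vert_{L^1}\lesssim2^{-2\rho j}(1+(2^jt^{\frac1n})^{\frac14+\rho})\epsilon_1$.

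For step (3): the block $f_{<j}$ has Fourier support in $|\xi|\lesssim2^j$, so the computations of step (1) apply verbatim: $\Vert f_{<j}\Vert_{L^2}\lesssim2^{\frac j2}\epsilon_1$, and by the same decomposition and refinement $\Vert xf_{<j}\Vert_{L^2}\lesssim[2^{-\frac j2}+t^{\frac1{2n}}\min(1,(2^jt^{\frac1n})^{-\alpha})]\epsilon_1$. When $2^j\ge t^{-\frac1n}$ the second term dominates $2^{-\frac j2}$, so $\Vert xf_{<j}\Vert_{L^2}\lesssim t^{\frac1{2n}}(2^jt^{\frac1n})^{-\alpha}\epsilon_1$ (and $\lesssim t^{\frac1{2n}}\epsilon_1$ at $\alpha=0$); feeding these into the two interpolation inequalities exactly as in step (2) produces $\Vert f_{<j}\Vert_{L^1}\lesssim(2^jt^{\frac1n})^{\frac14-\frac\alpha2}\epsilon_1$ and $\Vert|x|^{2\rho}f_{<j}\Vert_{L^1}\lesssim2^{-2\rho j}(2^jt^{\frac1n})^{\frac14+\rho}\epsilon_1$, completing the lemma.
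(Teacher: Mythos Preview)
Your outline is sound and matches the paper's approach, which in fact omits the proof entirely and simply refers to Lemma~A.1, Lemma~A.2, and the beginning of Section~2.4 in \cite{germain2016asymptotic}. The soft parts --- the $L^\infty$ bound on $\hat f_j$, the $L^2$ bound $\|f_j\|_{L^2}\lesssim 2^{j/2}\epsilon_1$, the splitting of $\widehat{xf_j}$ into the cutoff-derivative piece and the $\psi(2^{-j}\cdot)\partial_\xi\hat f$ piece, and the two weighted interpolation inequalities yielding the $L^1$ and $|x|^{2\rho}$-weighted $L^1$ bounds --- are exactly the argument one finds in \cite{germain2016asymptotic}, so there is nothing to add there.

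The one point worth a remark is the $\alpha$-refinement of $\|xf_j\|_{L^2}$, which you correctly isolate as the only non-soft ingredient. Your proposal to recover it by going back to the equation through the $y$-variable or Duhamel is the right instinct, but note that with the bootstrap norm as written in \emph{this} paper (only $\|y\|_{L^2}$, not higher regularity of $y$, is controlled in Section~3) the term $\|P_j y\|_{L^2}$ does not obviously pick up extra decay in $j$; in \cite{germain2016asymptotic} the corresponding gain comes from a slightly stronger weighted norm. This is not a fatal issue here: tracing the uses of Lemma~4.7 in cases~(ii)--(iv), only the $\alpha=0$ versions of the $L^1$ and weighted-$L^1$ bounds (your third and fifth lines, and the $\alpha=0$ instance of \eqref{highfreq}) are actually invoked, so the $\alpha$-refined statements, while inherited from \cite{germain2016asymptotic}, are not needed for the proof of Proposition~4.1.
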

\begin{proof} The argument in Lemma A.1, A.2, and the beginning of Section 2.4 in \cite{germain2016asymptotic} still works, so we omit the details.\end{proof}
Now we are ready to prove Proposition 4.1. To prove this, first we decompose the integral as follows:
\begin{align*}
&i\xi\int e^{-itH}\hat{f}(t,\xi_1)\cdots\hat{f}(t,\xi_p)d\xi_1\cdots d\xi_{p-1}\\=&\sum_{(k_1,\ldots,k_{p-1})\in \mathbb{Z}^{p-1}}i\xi\int e^{-itH}\hat{f}(t,\xi_1)\cdots\hat{f}(t,\xi_p)\psi(\frac{\xi_1}{2^{k_1}})\cdots \psi(\frac{\xi_{p-1}}{2^{k_{p-1}}})d\xi_1\cdots d\xi_{p-1}.
\end{align*}
Let $|\xi|\in [2^j,2^{j+1}]$. We often abbreviate $2^a\ll 2^b$ by $a\ll b$ and similarly for $2^a\lesssim 2^b$ and $2^a\sim 2^b$ whenever $a,b\in \{j,k_1,\ldots,k_{p-1}\}.$\\
Now we estimate the above integral by dividing into several cases:
\begin{enumerate}[(i)] 
    \item All frequencies are bounded by $t^{-\frac{1}{n}},$ namely $2^j, 2^{k_1},\ldots,2^{k_{p-1}}\lesssim t^{-\frac{1}{n}}$. In this case, the integral can be estimated as:\begin{align*}
    &\left|\sum_{2^j,2^{k_1},\ldots,2^{k_{p-1}}\lesssim t^{-\frac{1}{n}}}i\xi \int e^{-itH}\hat{f}(t,\xi_1)\cdots\hat{f}(t,\xi_p)\psi(\frac{\xi_1}{2^{k_1}})\cdots \psi(\frac{\xi_{p-1}}{2^{k_{p-1}}})d\xi_1\cdots d\xi_{p-1}\right|\\\lesssim&\sum_{ 2^j,2^{k_1},\ldots,2^{k_{p-1}}\lesssim t^{-\frac{1}{n}}}2^j\epsilon_1^p 2^{k_1+\cdots+ k_{p-1}}\lesssim 2^j\epsilon_1^pt^{-\frac{p-1}{n}}\mathbf{1}_{t<2^{-nj}}\lesssim 2^j \epsilon t^{-\frac{n-1}{n}}\mathbf{1}_{t<2^{-nj}},
    \end{align*} so integrating in time gives \[\int_0^{2^{-nj}}2^j\epsilon t^{-\frac{n-1}{n}}dt\lesssim \epsilon.\]
    \item $2^j>t^{-\frac{1}{n}}$ and ${k_1},\ldots,{k_{p-1}}\lesssim j.$ In this case by Lemma 4.5 one has \begin{align*}
    I&:=\sum_{{k_1},\ldots,{k_{p-1}}\lesssim j}i\xi \int e^{-itH(\xi,\xi_1,\ldots,\xi_{p-1})}\hat{f}(t,\xi_1)\cdots\hat{f}(t,\xi_p)\psi(\frac{\xi_1}{2^{k_1}})\cdots \psi(\frac{\xi_{p-1}}{2^{k_{p-1}}})d\xi_1\cdots d\xi_{p-1}\\=&i\xi\int e^{-itH}\hat{f}(t,\xi_1)\cdots\hat{f}(t,\xi_p)\psi(\frac{\xi_1}{2^j})\cdots \psi(\frac{\xi_{p-1}}{2^j})d\xi_1\cdots d\xi_{p-1}\\=&i2^{pj}\int e^{-it2^{nj}H(\xi',\xi_1,\ldots,\xi_{p-1})}\xi'\hat{f}_{\lesssim j}(t,2^j\xi_1)\cdots\hat{f}_{\lesssim j}(t,2^j\xi_p')\chi(\frac{\xi_1}{C})\cdots \chi(\frac{\xi_{p-1}}{C})d\xi_1\cdots d\xi_{p-1}\\=&\frac{t^{-\frac{p-1}{2}}\xi}{|\xi|^{\frac{(p-1)(n-2)}{2}}}\sum_{\substack{0\leq j\leq p-1\\j\neq \frac{p-2}{2}}}c_je^{id_j t\xi^n}\hat{f}(t,-\xi_{p,j})^{j+1}\hat{f}(t,\xi_{p,j})^{p-j-1}\mathbf{1}_{|\xi|>t^{-\frac{1}{n}}}+2^{pj}O\left(\frac{\Vert\langle x\rangle^{2\rho}\hat{F}\Vert_{L_x^1}}{(2^{nj}t)^{\frac{p-1}{2}+\rho}}\right)
    \end{align*} where $\xi'=2^{-j}\xi$ and $\xi_p'=2^{-j}(\xi-\xi_1-\cdots-\xi_{p-1}),$ and $F(\xi_1,\ldots,\xi_{p-1})=\hat{f}_{\lesssim j}(t,2^j\xi_1)\cdots\hat{f}_{\lesssim j}(t,2^j\xi_p').$\\Also one may calculate \[\hat{F}(x_1,\ldots,x_{p-1})=2^{-pj}\int e^{-iz\xi}f_{\lesssim j}(\frac{z}{2^j})\prod_{l=1}^{p-1}f_{\lesssim j}(\frac{z-x_l}{2^j})dz,\] so that Minkowski's integral inequality gives \begin{align*}
    \Vert \hat{F}\Vert_{L^1}\lesssim& 2^{-pj}\int |f_{\lesssim j}(\frac{z}{2^j})|\prod_{l=1}^{p-1}\Vert f_{\lesssim j}(\frac{z-\cdot}{2^j})\Vert_{L^1_{x_l}} dz=\Vert f_{\lesssim j}\Vert_{L^1}^p,\\\Vert |x|^{2\rho}\hat{F}\Vert_{L^1}\lesssim & 2^{-pj}\int (|z|^{2\rho}+\sum_{l=1}^{p-1}|z-x_l|^{2\rho})|f_{\lesssim j}(\frac{z}{2^j})|\prod_{l=1}^{p-1}|f_{\lesssim j}(\frac{z-x_l}{2^j})|dzdx_1\cdots dx_{p-1}\\\lesssim& 2^{2\rho j}\Vert f_{\lesssim j}\Vert_{L^1}^{p-1} \Vert|x|^{2\rho}f_{\lesssim j}\Vert_{L^1}.
    \end{align*} Hence, finally one has 
    \begin{align*}
    &|I-\frac{t^{-\frac{p-1}{2}}\xi}{|\xi|^{\frac{(p-1)(n-2)}{2}}}\sum_{\substack{1\leq j\leq p\\j\neq \frac{p-2}{2}}}c_je^{id_j t\xi^n}\hat{f}(t,-\xi_{p,j})^{j+1}\hat{f}(t,\xi_{p,j})^{p-j-1}\mathbf{1}_{|\xi|>t^{-\frac{1}{n}}}|\\\lesssim&2^{pj}2^{-n(\frac{p-1}{2}+\rho)j}t^{-\frac{p-1}{2}-\rho}\left(\Vert f_{\lesssim j}\Vert_{L^1}^p+2^{2\rho j}\Vert f_{\lesssim j}\Vert_{L^1}^{p-1} \Vert|x|^{2\rho}f_{\lesssim j}\Vert_{L^1}\right)\mathbf{1}_{t>2^{-nj}}\\\lesssim& 2^{pj}(2^jt^{\frac{1}{n}})^{-\kappa}\epsilon_1^p\mathbf{1}_{t>2^{-nj}}\lesssim2^{-(\kappa-p)j}t^{-1-\frac{\kappa-p}{n}}\epsilon \mathbf{1}_{t>2^{-nj}},
    \end{align*} where $\kappa:=(n-1)\rho+(\frac{n}{2}-\frac{1}{4})(p-1)-\frac{1}{4}>p$ if $\rho$ is chosen sufficiently close to $\frac{1}{4}$. Hence integrating in time gives the bound $\lesssim\epsilon.$
    \item From now on, by symmetry we may only consider the case when $k_1$ is the largest among $k_1,\ldots,k_{p-1}.$ \\Here we consider the case $2^{k_1}>t^{-\frac{1}{n}}$ and ${k_1}\gg \text{others}.$ In this case, one has \[|\partial_{2}H|=|-n\xi_2^{n-1}+n(\xi-\xi_1-\cdots-\xi_n)^{n-1}|\sim 2^{(n-1)k_1},\quad \partial_2:=\partial_{\xi_2}.\] Moreover, the bounds on $\dfrac{1}{\partial_2H}$ are given as \[\left|\partial^\beta \left(\frac{1}{\partial_2H}\right)\right|\lesssim 2^{-(n-1)k_1}2^{-|\beta|k_1}\] for any multi-index $\beta$, which can be obtained via induction on $|\beta|$. This will be used to prove the following lemma:
\begin{lem}
    Let $m(\xi_1,\ldots,\xi_{p-1}):=\dfrac{\psi(2^{-k_1}\xi_1)\chi(2^{-k_1}\xi_2)\cdots\chi(2^{-k_1}\xi_{p-1})}{\partial_2H}.$ Then $m$ satisfies \[\Vert \int_{\mathbb{R}^{p-1}}m(\eta)e^{ix\cdot\eta}d\eta\Vert_{L_x^1}\lesssim 2^{-(n-1)k_1}.\]
    \end{lem}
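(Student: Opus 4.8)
The plan is to reduce to the unit frequency scale by a dilation and then invoke a standard non-stationary phase estimate. Write $K(x):=\int_{\mathbb{R}^{p-1}}m(\eta)e^{ix\cdot\eta}\,d\eta$. On the support of $m$ all the $\xi_l$ with $l\ge 2$ satisfy $|\xi_l|\lesssim 2^{k_1}$ while $|\xi_1|\sim 2^{k_1}$, so it is natural to substitute $\eta_l=2^{k_1}\zeta_l$ and set $\tilde m(\zeta):=2^{(n-1)k_1}m(2^{k_1}\zeta)$. The first step is to observe that $\tilde m$ is smooth, supported in a fixed compact subset of $\mathbb{R}^{p-1}$ independent of $k_1$, and satisfies $\|\partial^\beta\tilde m\|_{L^\infty}\lesssim 1$ uniformly in $k_1$ for every multi-index $\beta$. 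Indeed $\psi,\chi\in C_0^\infty$ have all derivatives bounded, and the pointwise bounds $|\partial^\beta(1/\partial_2H)|\lesssim 2^{-(n-1)k_1-|\beta|k_1}$ recorded just above the statement say precisely that after the dilation $\xi_l\mapsto 2^{k_1}\zeta_l$ the function $2^{-(n-1)k_1}\,\partial_2H$ is bounded below in modulus with all $\zeta$-derivatives $\lesssim 1$ on the relevant region, hence so is its reciprocal; the Leibniz rule then gives the claim.

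Granting this, a change of variables gives $K(x)=2^{(p-n)k_1}\tilde K(2^{k_1}x)$ with $\tilde K(y):=\int_{\mathbb{R}^{p-1}}\tilde m(\zeta)e^{iy\cdot\zeta}\,d\zeta$, and rescaling in $x$ yields $\|K\|_{L^1_x}=2^{-(n-1)k_1}\|\tilde K\|_{L^1_y}$. So it remains to prove the scale-invariant bound $\|\tilde K\|_{L^1_y}\lesssim 1$. For this I would split: for $|y|\lesssim 1$ one simply has $|\tilde K(y)|\le\|\tilde m\|_{L^1}\lesssim 1$, while for $|y|\gg 1$, integrating by parts $p$ times in the coordinate direction realizing $\max_i|y_i|\sim|y|$ and using the uniform bounds on $\partial^\beta\tilde m$ for $|\beta|\le p$ gives $|\tilde K(y)|\lesssim|y|^{-p}$, which is integrable over $\mathbb{R}^{p-1}$ since $p>p-1$. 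Combining the two regimes gives $\|\tilde K\|_{L^1_y}\lesssim 1$, and hence the lemma.

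The only genuinely technical point — and the step I expect to require the most care — is the uniform-in-$k_1$ control of $\tilde m$ together with all its derivatives; everything after that is bookkeeping of dilation factors and a routine non-stationary phase computation. That control is, however, essentially already in hand: it comes down to the fact that the polynomial $\partial_2H$ has degree $n-1$ and is comparable to $2^{(n-1)k_1}$ on the support of $m$, so that its reciprocal obeys Mikhlin-type bounds at scale $2^{k_1}$, which is exactly what the inductive estimates stated before the lemma provide. Once this lemma is available it plugs directly into Lemma 4.6 (boundedness of pseudo-product operators) with $A\sim 2^{-(n-1)k_1}$ to control the case (iii) contribution to $\partial_t\hat f$.
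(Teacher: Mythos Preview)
Your proof is correct and follows essentially the same strategy as the paper: both arguments rest on the derivative bounds $|\partial^\beta m|\lesssim 2^{-(n-1)k_1-|\beta|k_1}$, then use integration by parts to obtain decay of the inverse Fourier transform and split the $L^1$ integral into near and far regions. The only cosmetic differences are that you rescale to unit frequency first (whereas the paper carries the scale through) and you integrate by parts $p$ times in a single dominant direction to get $|y|^{-p}$ decay, while the paper applies $(\partial_1^2+\cdots+\partial_{p-1}^2)^p$ to get $|x|^{-2p}$ decay; both give integrable bounds over $\mathbb{R}^{p-1}$ and lead to the same conclusion.
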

\begin{proof}
    First observe that \[|\partial^\alpha m|\lesssim 2^{-(n-1)k_1}2^{-|\alpha|k_1}\] for any multi-index $\alpha$. Then one has
    \begin{align*}
    &\left||x|^{2p}\int_{\mathbb{R}^{p-1}}m(\eta)e^{ix\cdot\eta}d\eta\right|=\left|\int_{\mathbb{R}^{p-1}}[(\partial_1^2+\cdots\partial_{p-1}^2)^pm(\eta)]e^{ix\cdot\eta}d\eta\right|\\\lesssim& \int_{|\eta|\lesssim 2^{k_1}}2^{-(n-1)k_1}2^{-2pk_1}d\eta\lesssim 2^{-(n+p)k_1}.
    \end{align*}
    Moreover, a straightforward estimate gives \[\left|\int_{\mathbb{R}^{p-1}}m(\eta)e^{ix\cdot\eta}d\eta\right|\lesssim \int_{|\eta|\lesssim 2^{k_1}}2^{-(n-1)k_1}d\eta\lesssim 2^{-(n-p)k_1}.\] Therefore, one has \begin{align*}
    \Vert \int_{\mathbb{R}^{p-1}}m(\eta)e^{ix\cdot\eta}d\eta\Vert_{L_x^1}\lesssim&\int_{|x|\leq 2^{-k_1}}2^{-(n-p)k_1}dx+\int_{|x|> 2^{-k_1}}2^{-(n+p)k_1}|x|^{-2p}dx\\\lesssim &2^{-(n-1)k_1},
    \end{align*} which completes the proof.
\end{proof}
Now integration by parts gives
\begin{align*}
        &\sum_{\substack{{k_1}\gg \text{others}\\2^{k_1}>t^{-\frac{1}{n}}}}i\xi \int e^{-itH}\hat{f}(t,\xi_1)\cdots\hat{f}(t,\xi_p)\psi(\frac{\xi_1}{2^{k_1}})\cdots \psi(\frac{\xi_{p-1}}{2^{k_{p-1}}})d\xi_1\cdots d\xi_{p-1}\\=i\xi& \sum_{\substack{{k_1}\gg j\\2^{k_1}>t^{-\frac{1}{n}}}}\int e^{-itH}\hat{f}(t,\xi_1)\cdots\hat{f}(t,\xi_p)\psi(\frac{\xi_1}{2^{k_1}})\chi(\frac{C\xi_2}{2^{k_1}})\cdots \chi(\frac{C\xi_{p-1}}{2^{k_1}})d\xi_1\cdots d\xi_{p-1}\\=-i\xi& \sum_{\substack{{k_1}\gg j\\2^{k_1}>t^{-\frac{1}{n}}}}\int e^{-itH}\partial_2(\frac{1}{\partial_2H}\hat{f}(t,\xi_1)\cdots\hat{f}(t,\xi_p)\psi(\frac{\xi_1}{2^{k_1}})\chi(\frac{C\xi_2}{2^{k_1}})\cdots \chi(\frac{C\xi_{p-1}}{2^{k_1}}))d\xi_1\cdots d\xi_{p-1}\\=-\frac{\xi}{t}& \sum_{\substack{{k_1}\gg j\\2^{k_1}>t^{-\frac{1}{n}}}}\int e^{-itH}\frac{\partial_2^2H}{(\partial_2H)^2}\hat{f}_{\lesssim k_1}(t,\xi_1)\cdots\hat{f}_{\lesssim k_1}(t,\xi_p)\psi(\frac{\xi_1}{2^{k_1}})\chi(\frac{C\xi_2}{2^{k_1}})\cdots \chi(\frac{C\xi_{p-1}}{2^{k_1}})d\xi_1\cdots d\xi_{p-1}\\+\frac{\xi}{t}&\sum_{\substack{{k_1}\gg j\\2^{k_1}>t^{-\frac{1}{n}}}}\int e^{-itH}\frac{1}{\partial_2H}\hat{f}_{\lesssim k_1}(t,\xi_1)\partial_2\hat{f}_{\lesssim k_1}(t,\xi_2)\cdots\hat{f}_{\lesssim k_1}(t,\xi_p)\psi(\frac{\xi_1}{2^{k_1}})\chi(\frac{C\xi_2}{2^{k_1}})\cdots \chi(\frac{C\xi_{p-1}}{2^{k_1}}))d\xi_1\cdots d\xi_{p-1}\\-\frac{\xi}{t}&\sum_{\substack{{k_1}\gg j\\2^{k_1}>t^{-\frac{1}{n}}}}\int e^{-itH}\frac{1}{\partial_2H}\hat{f}_{\lesssim k_1}(t,\xi_1)\cdots\partial_2\hat{f}_{\lesssim k_1}(t,\xi_p)\psi(\frac{\xi_1}{2^{k_1}})\chi(\frac{C\xi_2}{2^{k_1}})\cdots \chi(\frac{C\xi_{p-1}}{2^{k_1}}))d\xi_1\cdots d\xi_{p-1}\\+\frac{\xi}{t}&\sum_{\substack{{k_1}\gg j\\2^{k_1}>t^{-\frac{1}{n}}}}\int e^{-itH}\frac{1}{\partial_2H}\hat{f}_{\lesssim k_1}(t,\xi_1)\cdots\hat{f}_{\lesssim k_1}(t,\xi_p)\psi(\frac{\xi_1}{2^{k_1}})\frac{C}{2^{k_1}}\chi'(\frac{C\xi_2}{2^{k_1}})\cdots \chi(\frac{C\xi_{p-1}}{2^{k_1}}))d\xi_1\cdots d\xi_{p-1}\\:=&\sum_{\substack{{k_1}\gg j\\2^{k_1}>t^{-\frac{1}{n}}}}(I_{k_1,1}+I_{k_1,2}+I_{k_1,3}+I_{k_1,4}).
    \end{align*}
    Therefore, by the Lemmas 4.6 and 4.8, one has \begin{align*}|I_{k_1,2}|+|I_{k_1,3}|\lesssim &\frac{2^j}{t2^{(n-1)k_1}}\Vert u_{\lesssim k_1}\Vert_{L^\infty}^{p-2}\Vert (xf)_{\lesssim k_1}\Vert_{L^2}\Vert f_{\lesssim k_1}\Vert_{L^2}\\\lesssim& \frac{2^j}{t^{1+\frac{p-5/2}{n}}2^{(n-3/2)k_1}}\epsilon_1^p\lesssim \frac{\epsilon2^j}{2^{(n-3/2)k_1}}t^{-2+\frac{5}{2n}}.\end{align*}Hence summing in $k_1$ and integrating in $t$ gives \begin{align*}
    \int_0^\infty \sum_{\substack{{k_1}\gg j\\2^{k_1}>t^{-\frac{1}{n}}}}|I_{k_1,2}|+|I_{k_1,3}|dt=\sum_{k_1\gg j}\int_{2^{-nk_1}}^\infty |I_{k_1,2}|+|I_{k_1,3}|dt\lesssim \sum_{k_1\gg j}\frac{\epsilon2^j}{2^{k_1}}\lesssim\epsilon.
    \end{align*} Since $I_{k_1,1}$ and $I_{k_1,4}$ can be estimated similarly, we omit the details.
    \item $2^{k_1}>t^{-\frac{1}{n}}$ and ${k_1}\sim {k_2}\gg j$, i.e. there are two or more largest frequencies. In this case, we may assume $k_1\sim k_2\gg \text{others},$ since the case when three or more frequencies are comparable to $k_1$ is similar. In this case one has \begin{align*}
    &\sum_{\substack{{k_1}\sim k_2\gg \text{others}\\2^{k_1}>t^{-\frac{1}{n}}}}i\xi \int e^{-itH}\hat{f}(t,\xi_1)\cdots\hat{f}(t,\xi_p)\psi(\frac{\xi_1}{2^{k_1}})\cdots \psi(\frac{\xi_{p-1}}{2^{k_{p-1}}})d\xi_1\cdots d\xi_{p-1}\\=&\sum_{\substack{{k_1}\gg j\\2^{k_1}>t^{-\frac{1}{n}}}}i\xi \int e^{-itH}\hat{f}_{\lesssim k_1}(t,\xi_1)\cdots\hat{f}_{\lesssim k_1}(t,\xi_p)\psi(\frac{\xi_1}{2^{k_1}})\tilde{\psi}(\frac{\xi_2}{2^{k_1}})\chi(\frac{C\xi_{3}}{2^{k_{1}}})\cdots \chi(\frac{C\xi_{p-1}}{2^{k_{1}}})d\xi_1\cdots d\xi_{p-1}\\=&\sum_{\substack{{k_1}\gg j\\2^{k_1}>t^{-\frac{1}{n}}}}i\xi2^{(p-1)k_1}\int e^{-i2^{nk_1}tH(\xi',\xi_1,\ldots,\xi_{p-1})}\hat{f}_{\lesssim k_1}(t,2^{k_1}\xi_1)\cdots\hat{f}_{\lesssim k_1}(t,2^{k_1}\xi_p')\\&\times\psi(\xi_1)\tilde{\psi}(\xi_2)\chi(C\xi_{3})\cdots \chi(C\xi_{p-1})d\xi_1\cdots d\xi_{p-1}\\:=&\sum_{\substack{{k_1}\gg j\\2^{k_1}>t^{-\frac{1}{n}}}}J_{k_1},
    \end{align*} where $\xi$ and $\xi_p'$ are the same as in the case (ii), and $\tilde{\psi}$ is a bump function supported on $|\xi|\sim 1$ but has a wider support than $\psi$. Now again Lemma 4.5 gives \begin{align*}
    |J_{k_1}|\lesssim 2^j2^{(p-1)k_1}\frac{\Vert \langle x\rangle^{\rho}\hat{F}\Vert_{L^1_x}}{(2^{nk_1}t)^{\frac{p-1}{2}+\rho}}
    \end{align*} with $F:=\hat{f}_{\lesssim k_1}(t,2^{k_1}\xi_1)\cdots\hat{f}_{\lesssim k_1}(t,2^{k_1}\xi_p')$. Now a similar calculation to the case (ii) gives the bound \[|J_{k_1}|\lesssim \frac{2^j}{2^{k_1}}2^{-(\kappa-p)k_1}t^{-1-\frac{\kappa-p}{n}}\epsilon,\] where $\kappa$ is the same as in (ii). Hence summation on $k_1$ and integration on $t$ gives
    \[\int_0^\infty \sum_{\substack{{k_1}\gg j\\2^{k_1}>t^{-\frac{1}{n}}}}|J_{k_1}|\lesssim \sum_{k_1\gg j}\frac{\epsilon 2^j}{2^{k_1}}2^{-(\kappa-p)k_1}\int_{2^{-nk_1}}^\infty t^{-1-\frac{\kappa-p}{n}}dt \lesssim  \sum_{k_1\gg j}\frac{\epsilon 2^j}{2^{k_1}}\lesssim \epsilon\] and completes the proof.
\end{enumerate}
\printbibliography[
heading=bibintoc,
title={References}
]
\end{document}